 \newtheorem{thm}{Theorem}[section]
 \newtheorem{lem}[thm]{Lemma}
 \newtheorem{prop}[thm]{Proposition}
 \theoremstyle{definition}
 \newtheorem{rem}[thm]{Remark}
 \numberwithin{equation}{section}
 \def\bR{\mathbb{R}}
\newtheorem{Lemma A.1}{Lemma A.1}
\theoremstyle{definition}
\theoremstyle{remark}
\newcommand{\Real}{\mathbb R}
\begin{document}
\title{Sharp one component regularity for Navier-Stokes  }
\author{Bin Han\footnotemark[1]\and Zhen Lei \footnotemark[2]\and Dong Li\footnotemark[3] \and Na Zhao\footnotemark[4]}
\renewcommand{\thefootnote}{\fnsymbol{footnote}}

\footnotetext[1]{ Department of Mathematics, Hangzhou Dianzi University,
Hangzhou, 310018,  China.}

\footnotetext[2]{ School of Mathematical Sciences, Fudan University,
Shanghai,  200433,  China. }

\footnotetext[3]{ Department of Mathematics, University of British Columbia,
1984 Mathematics road, Vancouver,  Canada V6T1Z2 
and Department of Mathematics, The Hong Kong University of Science \& Technology,
Clear Water Bay, Kowloon, Hong Kong}

\footnotetext[4]{ School of Mathematical Sciences, Fudan University,
Shanghai,  200433,  China. }
\date{}

\maketitle

\begin{abstract}
We consider the conditional  regularity of mild solution $v$ to the incompressible Navier-Stokes equations in three dimensions. Let $e \in \mathbb{S}^2$ and $0 < T^\ast < \infty$.
J. Chemin and  P. Zhang \cite{CP} proved the regularity of $v$ on $(0,T^\ast]$ if there exists $p \in (4, 6)$ such that
$$\int_0^{T^\ast}\|v\cdot e\|^p_{\dot{H}^{\frac{1}{2}+\frac{2}{p}}}dt < \infty.$$
J. Chemin, P. Zhang and Z. F. Zhang \cite{CPZ} extended the range of $p$ to $(4, \infty)$.
In this article we settle the case  $p \in [2, 4]$.  Our proof also works for the case
$p \in (4,\infty)$.
\end{abstract}

\maketitle





\section{Introduction}
Consider the Cauchy problem of the three-dimensional incompressible Navier-Stokes equations on $\Real^3$
\begin{equation}\label{e1.1}
\left\{
 \begin{array}{rlll}
 &\partial_tv+v\cdot\nabla v-\Delta v+\nabla P=0,
\ \ &\ x\in\Real^3,\ t>0,\\
  &\hbox{div}\, v=0, \ \ &\ x\in\Real^3,\ t>0,\\
  &v(0,x)=v_0(x),  \ \ &\ x\in \Real^3.
   \end{array}
  \right.
\end{equation}
Here $v:\, [0,\infty)\times \mathbb R^3\to \mathbb R^3$
 represents the velocity field of the fluid flow and $P:\, [0,\infty)\times \mathbb R^3\to
 \mathbb R$ denotes the pressure. The first two terms represent Newton's acceleration law in Eulerian coordinates whilst the term $-\nabla P$ corresponds to the fluid stress. For the dissipation term
 we have set the kinematic viscosity to be $1$ for simplicity. Since universal
 physical laws should be independent
 of the underlying units (dimension), equation \eqref{e1.1} remains invariant under natural scaling transformations.
  If $(v,P)$ is a solution to \eqref{e1.1}, then for any
 $\lambda>0$,
 \begin{align*}
 v_{\lambda}(t,x) = \lambda v(\lambda^2 t, \lambda x), \quad
 P_{\lambda}(t,x)= \lambda^2 P(\lambda^2 t, \lambda x)
\end{align*}
is also a solution corresponding to rescaled initial data $v_{0,\lambda} (x) = \lambda
v_0(\lambda x)$. Such scaling transformation determines the critical space (norm) for Navier-Stokes
and plays a fundamental role in the wellposedness theory.

The existence of global weak solutions to \eqref{e1.1} is known since the famous work of Leray \cite{Le} (see also Hopf \cite{Ho} for the bounded domain case) for initial data $v_0\in L^2(\Real^3)$ with $\mathrm{div}v_0=0$. The uniqueness and global regularity of Leray-Hopf weak solutions is still one of the most challenging open problems. On the other hand, there exist a vast literature on finite time blowup or non-blowup criterions for local strong solutions. For instance,  the  Prodi-Serrin-Ladyzhenskaya criterion says that
if $$\int_0^{T^\ast}\|v(t, \cdot)\|_{L^q}^pdt < \infty,\quad \frac{2}{p} + \frac{3}{q} = 1$$
for some $3 \leq q \leq \infty$, then $v$ is still regular at time $T^\ast < \infty$, based on a series of important works \cite{Pr, Se, La, St, ESS, Gi}.
We point out that the quantity involved is a dimensionless one with respect to the natural scaling of the Navier-Stokes equations.
Later on, many efforts have been made on weakening the above criterion by imposing constraints only on partial components or directional derivatives of velocity field.
See, for instance, \cite{ZP, CT1, CT2, FQ3, NP, NNP} and the references therein.

In a recent work \cite{CP}, J. Chemin and P. Zhang initiated the following program: To prove the regularity of solutions by only imposing the following assumption
$$I_{p}(v\cdot e) \triangleq \int_0^{T^\ast}\|v\cdot e\|^p_{\dot{H}^{\frac{1}{2}+\frac{2}{p}}}dt < \infty.$$
Here $e \in \mathbb{S}^2$ and $2 \leq p < \infty$. The remarkable feature of the quantity $I_p(v\cdot e)$ lies in the fact that it is a \textit{dimensionless} quantity which only involves \textit{one component} of the velocity field. As an important step towards this line of research,
J. Chemin and P. Zhang \cite{CP} succeeded in the case of $4 < p < 6$, which was subsequently extended by J. Chemin, P. Zhang and Z. F. Zhang \cite{CPZ} to $4 < p < \infty$.
In this article, we give a streamlined proof for all $2 \leq p < \infty$. More precisely, we prove the following theorem.


\begin{thm}\label{t1.2}
Let $v_0 \in \dot H^{\frac{1}{2}}$ with $\nabla\cdot v_0 = 0$ and $\Omega_0 = \nabla\times v_0 \in L^{r_0}$ for some $1<r_0<2$. Let $0 < T^\ast < \infty$ and
 $$v \in C([0,T^*); \dot H^{\frac{1}{2}})\cap L^2([0,T^*);\dot{H}^{\frac{3}{2}})$$
 be the unique local mild solution to the three-dimensional Navier-Stokes equations (\ref{e1.1}) with initial data $v_0$.
If $I_{p}(v\cdot e) < \infty$
for some  $p \in [2,\infty)$ and $e \in \mathbb{S}^2$, then $v \in C([0,T^*];
\dot H^{\frac{1}{2}})\cap L^2([0,T^*];\dot{H}^{\frac{3}{2}})$ and must be regular up to time $T^\ast$:
 more precisely
 \begin{align*}
 \max_{0\le t\le T^\ast } (\| v(t) \|_{\dot H^{\frac 12}} +\| \Omega(t) \|_{L^{r_0}} )<\infty,
 \end{align*}
 and
 for any $ 0 < t_0 < T^\ast$,
 \begin{align*}
 \max_{t_0\le t \le T^{\ast}}  (\| v(t) \|_{ \dot H^1} +
 \| \nabla \Omega (t) \|_{L^{r_0}} ) <\infty.
 \end{align*}
\end{thm}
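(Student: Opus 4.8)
\noindent\emph{Proof strategy.}
Without loss of generality take $e=e_3$, so the hypothesis reads $v^3\in L^{p}([0,T^\ast];\dot H^{\frac12+\frac2p})$; the exponent $\frac12+\frac2p$ is precisely the one rendering this norm scale invariant, and for $p\in[2,4]$ it interpolates the energy space $\dot H^{\frac12}$ with the dissipation space $\dot H^{\frac32}$, the endpoints $p=2$ and $p=\infty$ (where the controlled norm sits exactly at the dissipation, resp.\ energy, level) being the delicate cases. The plan is to reduce everything to the single a priori bound
\begin{equation*}
\sup_{0\le t<T^\ast}\|v(t)\|_{\dot H^{1/2}}^{2}+\int_{0}^{T^\ast}\|v(t)\|_{\dot H^{3/2}}^{2}\,dt<\infty .
\end{equation*}
Granting this, interpolation in time and frequency gives $v\in L^{4}([0,T^\ast];\dot H^{1})\hookrightarrow L^{4}([0,T^\ast];L^{6})$, a Prodi--Serrin--Ladyzhenskaya regularity class ($\tfrac24+\tfrac36=1$, $6>3$); hence the local mild solution continues past $T^\ast$ and is smooth for $t>0$, the $\dot H^{1/2}$ bound on $[0,T^\ast]$ holds, and parabolic smoothing upgrades it to the $\dot H^{1}$ bound on $[t_0,T^\ast]$. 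For the vorticity one writes $\partial_t\Omega-\Delta\Omega=-\mathrm{div}(v\otimes\Omega)+\mathrm{div}(\Omega\otimes v)$ in Duhamel form and runs an $L^{r_0}$ estimate using the bounds just obtained for $v$, which propagates $\Omega_0\in L^{r_0}$ to $\Omega\in C([0,T^\ast];L^{r_0})$ and, with one more application of parabolic smoothing away from $t=0$, yields $\nabla\Omega\in L^{\infty}([t_0,T^\ast];L^{r_0})$.

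The core is the a priori estimate, which I would obtain by an energy argument at the critical level. Since $v$ is divergence free the pressure drops out of the $\dot H^{1/2}$ identity, leaving
\begin{equation*}
\tfrac12\frac{d}{dt}\|v(t)\|_{\dot H^{1/2}}^{2}+\|v(t)\|_{\dot H^{3/2}}^{2}=-(v\cdot\nabla v\mid v)_{\dot H^{1/2}} .
\end{equation*}
Writing $v=(v^h,v^3)$ and $v\cdot\nabla=v^h\cdot\nabla_h+v^3\partial_3$, and using $\mathrm{div}_h v^h=-\partial_3 v^3$ in the form of the identity $v^h\cdot\nabla_h v^h=\mathrm{div}_h(v^h\otimes v^h)+(\partial_3 v^3)\,v^h$, the right-hand side splits into four terms, three of which — namely $(v\cdot\nabla v^3\mid v^3)_{\dot H^{1/2}}$, $(v^3\partial_3 v^h\mid v^h)_{\dot H^{1/2}}$ and $((\partial_3 v^3)\,v^h\mid v^h)_{\dot H^{1/2}}$ — carry an explicit factor of $v^3$ or $\partial_3 v^3$. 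Each of these I would bound by pairing in $\dot H^{1/2}$ so as to split off one factor in $\dot H^{3/2}$ (a piece of the dissipation), estimating the remaining $\dot H^{-1/2}$ norm via the sharp product law in homogeneous Sobolev spaces as the product of $v$ in $\dot H^{3/2-2/p}$ and $v^3$ in $\dot H^{1/2+2/p}$, interpolating $\dot H^{3/2-2/p}=[\dot H^{1/2},\dot H^{3/2}]_{1-2/p}$, and closing with Young's inequality — thereby absorbing one full power of the dissipation and leaving the Gronwall weight $C\,\|v^3(t)\|_{\dot H^{1/2+2/p}}^{p}$, which is integrable on $[0,T^\ast]$ by hypothesis. (At the endpoint $p=2$ the relevant product law is borderline and should be replaced by a logarithmic refinement or an $L^2_t\dot H^{3/2}$ duality argument.)

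The genuinely problematic term is the fourth one, the purely horizontal self-interaction $(\mathrm{div}_h(v^h\otimes v^h)\mid v^h)_{\dot H^{1/2}}$: it involves no $v^3$, and since $\dot H^{1/2}$ is scaling critical in three dimensions there is no surplus with which to close it — a crude bound only gives $\lesssim\|v^h\|_{\dot H^{1/2}}\|v^h\|_{\dot H^{3/2}}^{2}$, which Gronwall cannot digest. This is the main obstacle. I would meet it by exploiting incompressibility one final time, structurally: decompose the horizontal field as $v^h=\nabla_h\phi+\nabla_h^{\perp}\psi$ with $\Delta_h\phi=\mathrm{div}_h v^h=-\partial_3 v^3$, so that the potential part $\nabla_h\phi=-\nabla_h\Delta_h^{-1}\partial_3 v^3$ is again controlled by $v^3$ and only the two-dimensionally divergence-free part $\nabla_h^{\perp}\psi$ feeds into genuinely two-dimensional trilinear expressions — for which the antisymmetric (null) structure behind global regularity of $2$D Navier--Stokes, together with anisotropic Littlewood--Paley decompositions in the horizontal versus vertical frequencies in the spirit of \cite{CP,CPZ}, recover the missing derivative. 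Making this reduction quantitative and uniform down to $p=2$ and up to $p=\infty$ is where the real work lies; everything else is bookkeeping with homogeneous Besov norms to keep the estimates scale invariant.
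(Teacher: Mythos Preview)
Your identification of the main obstacle is correct, and so is the Hodge--Biot--Savart splitting $v^h=\nabla_h^{\perp}\Delta_h^{-1}\omega-\nabla_h\Delta_h^{-1}\partial_3 v^3$: this is exactly the decomposition the paper uses. The gap is in how you propose to close the purely horizontal term. Your appeal to ``the antisymmetric (null) structure behind global regularity of 2D Navier--Stokes'' at the $\dot H^{1/2}$ level is not a workable plan: two-dimensional global regularity does \emph{not} come from any cancellation in the trilinear form $(v\cdot\nabla v\mid v)_{\dot H^{1/2}}$ --- that quantity is as bad in 2D as in 3D at the critical level. The 2D mechanism is the absence of vortex stretching in the scalar vorticity equation, i.e.\ a statement about $\omega$, not about $v^h$ in $\dot H^{1/2}$. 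So after your decomposition the remaining term $(\mathrm{div}_h(v^h_\omega\otimes v^h_\omega)\mid v^h_\omega)_{\dot H^{1/2}}$, with $v^h_\omega=\nabla_h^{\perp}\Delta_h^{-1}\omega$, still scales like $\|v^h_\omega\|_{\dot H^{1/2}}\|v^h_\omega\|_{\dot H^{3/2}}^2$ and there is no hidden cancellation to save you; the strategy of running the $\dot H^{1/2}$ energy directly cannot close.

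The paper does not attempt this. Instead it promotes $(\omega,\nabla v^3)$ to the governing unknowns and writes the equation for the \emph{horizontal vorticity}
\[
\partial_t\omega+v\cdot\nabla\omega-\Delta\omega=\partial_3 v^3\,\omega+\partial_2 v^3\,\partial_3 v^1-\partial_1 v^3\,\partial_3 v^2,
\]
whose right-hand side carries a factor of $\partial v^3$ in \emph{every} term --- this is where the 2D structure (no self-stretching of $\omega$) actually lives. One then estimates $\|\omega\|_{L^r}$ with $r<2$ close to $2$ together with $\||\nabla_h|^{-\delta}\nabla v^3\|_{L^2}$, $\delta=\tfrac3r-\tfrac32$, and closes a joint Gronwall whose weight is $\|v^3\|_{\dot H^{1/2+2/p}}^p$. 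The choice $r<2$ is essential: terms like $\int\partial_2 v^3\,\partial_2\Delta_h^{-1}\partial_3\omega\,\omega|\omega|^{r-2}$ involve $|\nabla_h|^{-1}\partial_3\omega$, and at $r=2$ the horizontal Sobolev embedding is exactly borderline and fails; for $p=2$ even $r<2$ is not enough and a vertical Littlewood--Paley decomposition with a log-loss is needed. Finally, a separate reduction (the paper's Proposition~\ref{vH1}) shows that control of $\int_0^{T^\ast}\|\omega\|^p_{\dot H^{-1/2+2/p}}\,dt$ (or of $\sup_t\|\omega\|_{L^r}$ for $p>4$) plus $I_p(v^3)$ bounds $\|v\|_{\dot H^1}$; control of $\dot H^{1/2}$ is then a consequence, not the starting point. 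In short: the missing idea is to abandon the $\dot H^{1/2}$ energy for $v$ and run the estimate on $\omega$ in $L^{r}$, where the null structure is visible and usable.
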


\begin{rem}
By standard smoothing estimates, the solution $v$ enjoys higher regularity:
$v \in \dot H^m$, $\Omega \in W^{m, r_0}$ for any $m\ge 1$ and any $0<t\le T^\ast$.
\end{rem}

\begin{rem}
In order to simplify the presentation we did not try to lower down the regularity requirement
on initial data although this can certainly be optimised by a more refined analysis. We will pursue
this interesting issue elsewhere. In view of the two-dimensional Biot-Savart law it is of some importance that $\Omega \in L^{r_0}$ for some $1<r_0<2$. The bulk of our analysis will focus
on the case $2\le p\le 4$ which was previously open. The case $4<p<\infty$ can also be treated by our analysis and is included
in a later section. It should be noted that in \cite{CPZ} the case $4<p<\infty$ is treated under the
assumption that the initial vorticity $\Omega_0 \in L^{\frac 32} \cap L^2$. By Sobolev
embedding the condition $\Omega_0 \in L^{\frac 32}$ implies that the initial
velocity $v_0 \in \dot H^{\frac 12}$.
In comparison with
\cite{CPZ} our analysis in the regime $4<p<\infty$ offers a slight relaxation since we only
require $v_0 \in \dot H^{\frac 12}$ with $\Omega_0 \in L^{r_0}$ for \emph{some}
$r_0 \in (1,2)$.

\end{rem}

We now give a brief overview of the proof and explain some main steps.
Without loss of generality, we assume $e=(0,0,1)$ throughout this paper and thus the dimensionless quantity $I_{p}(v\cdot e)$ in the above theorem becomes
$$I_{p}(v^3) = \int_0^{T^*}\|v^3\|^p_{\dot{H}^{\frac{1}{2}+\frac{2}{p}}}dt.$$

{\bf{Step 1.}} Reduction to the two-dimensional vorticity $\omega= -\partial_2 v^1 + \partial_1 v^2$.

For given initial data $v_0\in \dot H^{\frac 12}$, thanks to the smoothing estimates, we have
$v(t) \in \dot H^s$ for any $s\ge 1/2$ immediately on the short time interval $(0,\eta_0]$ for
some $\eta_0>0$ sufficiently small. Therefore by a shift of the time origin if necessary we may
assume without loss of generality that $v_0 \in \dot H^{\frac 12} \cap \dot H^1$. By a similar
reasoning we may also assume $\Omega_0 \in W^{4,r_0}$.   As a first step, we
show that (see Proposition \ref{vH1}): for any $T>0$,
\begin{align*}
\max_{0 \leq t \leq T}\|v\|_{\dot H^1} + \|\nabla v\|_{L^2([0, T],  \dot H^1)} \le2 \|v_0\|_{\dot H^1} \cdot e^{\operatorname{const}\cdot M(T)},
\end{align*}
where
\begin{align*}
   M(T) = \int_0^{T}\| \omega\|^p_{\dot H^{-\frac 12+\frac 2p}}dt+\int_0^{T} \|v^3\|^p_{\dot{H}^{\frac12+\frac2p}}dt.
   \end{align*}
 Whilst the controlling quantity $M(T)$ works for the full range $p\in [2,\infty)$, it should be
noted that for $4<p<\infty$, $-\frac 12 +\frac 2p<0$ and the controlling norm for $\omega$ is
a negative Sobolev norm which is not convenient to use (due to low frequencies) in later
computations.  For this reason we
also prove in Proposition \ref{vH1} (see Remark \ref{rem3.3}) that for
$4<p<\infty$ the quantity $M(T)$ can be replaced by
\begin{align*}
\widetilde M(T)= T \cdot (1+\sup_{0\le t\le T} \|\omega (t)\|_{\tilde r})^p +
 \int_0^T \| v^3 \|_{\dot H^{\frac 12 +\frac 2p} }^p dt,
 \end{align*}
where $\tilde r$ can be any number satisfying $\frac 12 <\frac 1{\tilde r} <\frac 23 (1-\frac 1p)$.

  The preceding argument then establishes a sharp non-blowup criterion: $v$ is regular on $[0, T^\ast]$ if
  $M(T^\ast) < \infty$ (resp. $\widetilde M(T^\ast)<\infty$ for
  $4<p<\infty$). In view of the assumption on $I_p$ in  Theorem \ref{t1.2}, it then suffices for us
  to prove $$\widetilde{I}_p(\omega ) \triangleq \int_0^{T^\ast}\|\omega\|^p_{\dot{H}^{-\frac12+\frac2p}}dt < \infty.$$
 For $p\in (4,\infty)$, it suffices to control
 $$ \sup_{0\le t \le T^\ast} \| \omega (t) \|_{\tilde r}$$
 for some $\frac 12 <\frac 1{\tilde r} <\frac 23 (1-\frac 1p)$.
We also note that the propagation of regularity of $\Omega $ in $W^{4,r_0}$ is not a problem thanks
to the control of $\|v \|_{\dot H^{\frac 12} \cap \dot H^1}$ (see Proposition \ref{vH1}).

{\bf{Step 2.}} Anisotropic decomposition of the velocity.

A remarkable idea introduced in Chemin-Zhang in \cite{CP} is to use the decomposition of the velocity field along horizontal and vertical directions and use the two-dimensional vorticity $\omega$ and $v^3$ as governing unknowns.  Denote
$$\nabla_h=(\partial_1,\partial_2),\ \  \nabla_h^\bot=(-\partial_2,\partial_1) \quad\hbox{and}\quad \Delta_h=\partial_1^2+\partial_2^2.$$
Then,  by using the Biot-Savart's law in the horizontal variables, we have
  \begin{align}\label{vhdecompose}
    &v^h_{\mathrm{curl}} = \nabla_h^\bot\Delta_h^{-1}\omega,\ \ v^h_{\mathrm{div}} = \nabla_h\Delta_h^{-1}\partial_3
    v^3,  \notag \\
    &v^h=v^h_{\mathrm{curl}} - v^h_{\mathrm{div}} \notag \\
     & \;\;\;\;= \begin{pmatrix}
     -\partial_2 \Delta_h^{-1} \omega - \partial_1 \Delta_h^{-1} \partial_3 v^3 \\
     \partial_1 \Delta_h^{-1} \omega - \partial_2 \Delta_h^{-1} \partial_3 v^3
     \end{pmatrix},
  \end{align}
where $$\omega=\partial_1v^2-\partial_2v^1.$$
It is easy to check that\footnote{In  \cite{CP}, Chemin-Zhang considered
$(\omega, \partial_3 v^3)$ as the governing unknowns which is very natural in view of the
physical picture that $v^3$ should be slowly changing in the vertical direction. In order to
control horizontal derivatives Chemin-Zhang used anisotropic spaces carrying positive and negative fractional derivatives in horizontal and vertical directions respectively. In this paper we
found it more convenient to work with the full gradient $\nabla v^3$ in order to trade off
fractional derivatives in the vertical direction.
}
\begin{align}
  &\partial_t \omega+v\cdot\nabla \omega-\Delta \omega=\partial_3v^3\omega+\partial_2v^3\partial_3v^1-\partial_1v^3\partial_3v^2; \label{e_omega_1} \\
  &\partial_t\partial_k v^3 +v\cdot\nabla \partial_k v^3-\Delta \partial_k v^3
  =-\partial_k v\cdot\nabla v^3+\partial_3\partial_k\Delta^{-1}\bigl(\sum_{i,j=1}^3\partial_jv^i\partial_iv^j\bigr), \quad k=1,2,3. \label{e_v3_2}
\end{align}
Thanks to the Biot-Savart's law, the above system written for $(\omega, v^3)$ is equivalent to
the original Navier-Stokes system for $v=(v^1,v^2,v^3)$.

{\bf{Step 3.}} Estimate of  $(\omega, v^3)$. This is the main part of our analysis.
 For fixed $2\le p<\infty$, we shall
choose $1<r<2$, $r$ sufficiently close to $2$, and work with the norms:
\begin{align*}
\| \omega \|_{L^r(\mathbb R^3)}, \quad \| |\nabla_h|^{-\delta} \nabla v^3 \|_2,
\end{align*}
where
\begin{align*}
\delta =\delta(r) = \frac 3r -\frac 32.
\end{align*}
It is not difficult to check that the above two norms have the same scaling
as $\| v\|_{\dot H^{1-\frac 3 r +\frac 32}} \sim \|v\|_{\dot H^{1-}}$
for\footnote{For any quantity $X$ when there is no ambiguity
 we shall use the notation $X+$ to denote $X+\epsilon$ with
sufficiently small $\epsilon$. The notation $X-$ is similarly defined.}
 $r=2-$.
These norms are certainly well-defined since for fixed $r_0$ (recall the initial vorticity $\Omega_0 \in L^{r_0}$ by assumption)
\begin{align*}
\| \omega \|_r + \| |\nabla_h|^{-\delta} \nabla v \|_2
& \lesssim \| \Omega \|_r + \| |\nabla_h|^{-\delta} \Omega \|_2 \notag \\
& \lesssim \| \Omega \|_{W^{4,r_0}},
\end{align*}
if we take $r$ sufficiently close to $2$.

There are several reasons why we choose the norm $\|\omega\|_{2-}$.  Firstly it is natural
to choose $\|\omega\|_p$ norm for some $p$ since in \eqref{e_omega_1} the convection
term $v\cdot \nabla \omega $ will not enter the estimates due to incompressibility. Secondly
if we compute the time derivative of $\| \omega \|_2^2$, then by using \eqref{e_omega_1},
we need to treat the nonlinear terms such as
\begin{align*}
\int_{\mathbb R^3} \partial_2 v^3 \partial_3 v^1 \omega dx
= -\int_{\mathbb R^3}
\partial_2 v^3 \partial_3  \partial_2\Delta_h^{-1} \omega  \cdot \omega dx
- \int_{\mathbb R^3} \partial_2 v^3 \partial_3 \partial_1 \Delta_h^{-1}   \partial_3 v^3
\cdot \omega dx.
\end{align*}
Note that the term $\partial_3 \partial_2 \Delta_h^{-1} \omega $
scales as $|\nabla_h|^{-1} \partial_3 \omega$ for which two-dimensional $L^{\infty}$ embedding
cannot map back to $L^2$.  For this reason one must resort to $\| \omega \|_r$ for some $r<2$.
By a similar reasoning for $v^3$ some negative regularity is needed in the horizontal direction.
This is the one of the reasons for choosing the governing norm as
$\| | \nabla_h|^{-\delta} \nabla v^3\|_2$.

There are a myriad of technical issues in connection with the aforementioned borderline situations.
To get a glimpse into this,  take for example $p=2$ for which $I_p(v^3)$ becomes
\begin{align*}
I_2(v^3) = \int_0^{T^{\ast}} \| v^3 \|_{\dot H^{\frac 32}}^2 dt.
\end{align*}
When computing the time evolution of
$\| \omega \|_r$-norm, we need to estimate a term such as (see Section $4$ for more details)
\begin{align*}
I_{22}:&=\int\partial_2v^3 \partial_2\Delta_h^{-1}\partial_3\omega\omega|\omega|^{r-2}dx.
\end{align*}
The only control we have on $\omega$ is $\|\omega\|_r$ and $\| \nabla \omega \|_r$ (from the diffusion term).
Therefore by using Sobolev embedding and H\"older it is quite natural to bound the above as
\begin{align*}
|I_{22}| \lesssim \| \partial_2 v^3 \|_{L_{x_1,x_2}^2 L_{x_3}^{\infty}}
\| \nabla \omega \|_r \cdot \| \omega \|_r^{r-1}.
\end{align*}
However, even though the quantity $\| \partial_2 v^3 \|_{L_{x_1,x_2}^2 L_{x_3}^{\infty} }$ scales
the same way as $\|v^3 \|_{\dot H^{\frac 32}}$, it cannot be bounded by it
due to the lack of embedding of $\dot H^{\frac 12}$ into $L^{\infty}$ in 1D.  To get around this
problem we perform a refined Littlewood-Paley decomposition in the vertical direction
and manage to obtain a logarithmic inequality of the form:
\begin{align*}
&\int\partial_2v^3|\nabla_h|^{-1}\partial_3\omega\omega|\omega|^{r-2}dx\\
\lesssim &
      \sqrt{\log\big(10+\||\nabla_h|^{-\delta}\partial v^3\|_{L^2}
     +\|\omega\|_{L^r}\big)}\big(\|v^3\|_{\dot{H}^{\frac{3}{2}}}+1\big)
      \|\nabla \omega\|_{L^{r}}
      \|\omega\|_{L^r}^{r-1} \notag \\
 &\;     +(\||\nabla_h|^{-\delta}\partial^2 v^3\|_{L^2}^{\frac12+\delta+
       \epsilon_1}+\||\nabla_h|^{-\delta}\partial^2 v^3\|_{L^2}^{\frac12+\delta-
       \epsilon_1})\cdot
       \frac{1}{1+\|\omega\|^{100}_{L^r}}\big\||\nabla
       \omega||\omega|^{\frac r2-1}\big\|_{L^2}.
\end{align*}
Such estimates turn out to be crucial for the Gronwall argument to work. There are many other
technical issues which cannot be mentioned in this short introduction. In any case
by a very involved analysis on the time evolution
of these norms and taking advantage of the a priori finiteness of $I_p(v^3)$,  we obtain
uniform control of $\|\omega\|_{r} + \||\nabla_h|^{-\delta} \nabla v^3 \|_2$ on the time interval
$[0,T^\ast]$.

{\bf{Step 4.}} Estimate of $\widetilde I_p(\omega)$ for $2\le p \le 4$.  Thanks to the
estimate of $\| \omega \|_r$ in Step 3, the case $4<p<\infty$ is already proven with the help
of Proposition \ref{vH1} and Remark \ref{rem3.3}. To finish the proof of the main theorem it
remains to estimate $\widetilde I_p(\omega)$ for $2\le p\le 4$. Our strategy is to first take $r$ sufficiently close to $2$ for each fixed $2\le p\le 4$, and then use the finiteness
of the scaling-above-critical quantity $\|\omega \|_{L_t^{\infty} L_x^r}$ together with
$\| \nabla ( |\omega|^{\frac r2} )\|_{L_t^2 L_x^2}$ obtained in Step 3 to bound
the critical (dimension-less) quantity $\widetilde I_p(\omega)$. Such a bound is certainly expected
from a scaling heuristic since both $\|\omega\|_{L_t^{\infty} L_x^{2-}}$
and $\|\nabla ( |\omega|^{1-} )\|_{L_t^2 L_x^2}$ carries almost $\dot H^1$ scaling
of velocity.  This then concludes the proof
of the main theorem.

The rest of this paper is organised as follows. In Section \ref{sec_pre} we set up some notation
and collect a few useful lemmas.  In Section 3 we prove Proposition \ref{vH1} which reduces matters
to the control of the horizontal vorticity $\omega$.
In Section 4 and Section 5, we obtain a priori estimates of $\| \omega \|_r $ and
$\| |\nabla_h|^{-\delta} \nabla v^3 \|_2$ for the case $2\le p\le 4$.  In Section 6 we
explain how to do the case $4<p<\infty$. The final section is devoted to the proof of the main
theorem.

\section{Notation and preliminaries}\label{sec_pre}

Let us first recall some Sobolev type inequalities which are relevant to the $L^2$ estimate for $|f|^{\frac r2}$ and $\nabla |f|^{\frac r2}$. The following Lemma will  often be used without
explicit mentioning.

\begin{lem} \label{lem00}
Let the dimension $n\ge 1$.
Fix $k\in \{1,\cdots,n\}$.  Let $1<r<\infty$.
Suppose $f:\, \mathbb R^n \to \mathbb R$ satisfies $\partial_k f \in C^0$ and $f$, $\partial_{kk} f \in L^r(\mathbb R^n)$.
Then $ \partial_k (|f|^{\frac r2}) \in L^2 (\mathbb R^n)$ and
\begin{align*}
- \int_{\mathbb R^3} \partial_{kk} f \cdot |f|^{r-2} f dx
& = (r-1) \int_{f\ne 0} |\partial_k f|^2 |f|^{r-2} dx \notag \\
& = \frac{4(r-1)}{r^2} \| \partial_k ( |f|^{\frac r 2} ) \|^2_{L^2 (f\ne 0)} \notag \\
&= \frac{4(r-1)} {r^2} \| \partial_k (|f|^{\frac r 2} ) \|_{L^2}^2.
\end{align*}
Furthermore for $1<r\le 2$,
\begin{align} \label{ineq_lem00}
\| \partial_k f \|_r \le \frac 2r \| \partial_k (|f|^{\frac r2}) \|_2 \cdot \| f \|_r^{1-\frac r 2}.
\end{align}
For the first group of equalities we also have the following vector-valued version.
Suppose $g:\, \mathbb R^n \to \mathbb R^n$ satisfies $\partial_k g \in C^0$ and $g$, $\partial_{kk} g \in L^r(\mathbb R^n)$.
Then $ \partial_k (|g|^{\frac r2}) \in L^2 (\mathbb R^n)$ and
\begin{align*}
- \int \partial_{kk} g \cdot |g|^{r-2} g  dx
\ge \frac {4(r-1)} {r^2} \| \partial_k ( |g|^{\frac r 2} ) \|_{L^2}^2.
\end{align*}
\end{lem}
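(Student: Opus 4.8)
The plan is to reduce everything to a one--dimensional statement by Fubini in the $x_k$--variable, dispose of the easy range $r\ge 2$ by the ordinary Sobolev chain rule, and handle the delicate range $1<r<2$ by regularising the singular nonlinearities $s\mapsto|s|^{r-2}s$ and $s\mapsto|s|^{r/2}$ at the origin. Write $x=(x_k,x')$ with $x'\in\mathbb{R}^{n-1}$; for a.e. $x'$ the slice $h(t)=f(t,x')$ is $C^1$ with $h'=\partial_kf(\cdot,x')$, and $h,h''\in L^r(\mathbb{R})$ (hence also $h'\in L^r(\mathbb{R})$, by the one--dimensional interpolation $\|h'\|_r\lesssim\|h\|_r^{1/2}\|h''\|_r^{1/2}$). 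I will use throughout the classical fact that the derivative of a $W^{1,1}_{\mathrm{loc}}$ function vanishes a.e. on its zero set, which makes $\{f=0\}$ harmless and identifies the $L^2$ and $L^2(f\ne0)$ norms.

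When $r\ge2$ the maps $s\mapsto|s|^{r-2}s$ and $s\mapsto|s|^{r/2}$ are $C^1$, the chain rule for Sobolev functions applies verbatim, all products that occur lie in $L^1$ (e.g. $|h''|\,|h|^{r-1}\in L^1$ and $|h|^{r-2}|h'|^2\in L^1$ by H\"older), and integrating by parts directly gives the equalities. The substance is $1<r<2$. There, for $\eta>0$ I multiply $h''$ by $(\eta^2+h^2)^{(r-2)/2}h$, integrate by parts on $[-R,R]$ and let $R\to\infty$ (the boundary terms vanish along a suitable sequence since $(\eta^2+h^2)^{(r-2)/2}|h|\le|h|^{r-1}$ and $h'|h|^{r-1}\in L^1$), obtaining
\begin{align*}
-\int_{\mathbb{R}}h''(\eta^2+h^2)^{\frac{r-2}{2}}h\,dt=\int_{\mathbb{R}}(\eta^2+h^2)^{\frac{r-4}{2}}\bigl(\eta^2+(r-1)h^2\bigr)|h'|^2\,dt\ \ge\ 0 .
\end{align*}
As $\eta\to0$ the left side tends to $-\int h''|h|^{r-2}h\,dt$ by dominated convergence (dominant $|h''|\,|h|^{r-1}$). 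The right--hand integrand is nonnegative and converges a.e. to $(r-1)|h|^{r-2}|h'|^2\mathbf{1}_{\{h\ne0\}}$, so Fatou first yields $(r-1)\int_{h\ne0}|h|^{r-2}|h'|^2\,dt<\infty$; then, since for $r\le2$ that integrand is dominated by $|h|^{r-2}|h'|^2$ (because $(\eta^2+h^2)^{(r-4)/2}(\eta^2+(r-1)h^2)\le(\eta^2+h^2)^{(r-2)/2}\le|h|^{r-2}$), which is now known to be $L^1$, dominated convergence upgrades Fatou to equality, proving $-\int h''|h|^{r-2}h\,dt=(r-1)\int_{h\ne0}|h|^{r-2}|h'|^2\,dt$. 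A parallel regularisation with $(\eta^2+h^2)^{r/4}$ — whose derivative $\tfrac r2(\eta^2+h^2)^{r/4-1}hh'$ is controlled by $\tfrac r2|h|^{r/2-1}|h'|$ — shows $|h|^{r/2}\in W^{1,1}_{\mathrm{loc}}(\mathbb{R})$ with $\frac{d}{dt}|h|^{r/2}=\tfrac r2|h|^{r/2-1}\operatorname{sgn}(h)\,h'\mathbf{1}_{\{h\ne0\}}\in L^2$, whence $\|\tfrac{d}{dt}|h|^{r/2}\|_{L^2}^2=\tfrac{r^2}{4}\int_{h\ne0}|h|^{r-2}|h'|^2\,dt$. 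Integrating the two identities in $x'$ (Tonelli, the integrands being nonnegative) gives the chain of equalities in the statement.

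For the inequality \eqref{ineq_lem00} with $1<r\le2$: on $\{f\ne0\}$ one has $|\partial_kf|=\tfrac2r|f|^{1-r/2}|\partial_k(|f|^{r/2})|$, so $\int|\partial_kf|^r=(\tfrac2r)^r\int_{f\ne0}|f|^{r(1-r/2)}|\partial_k(|f|^{r/2})|^r\,dx$, and H\"older with conjugate exponents $\tfrac2r,\tfrac2{2-r}$ bounds this by $(\tfrac2r)^r\|\partial_k(|f|^{r/2})\|_2^r\|f\|_r^{r(1-r/2)}$; the $r$--th root is \eqref{ineq_lem00} (with $r=2$ immediate). For the vector--valued version the same regularised integration by parts works: dotting $\partial_k\bigl[(\eta^2+|g|^2)^{(r-2)/2}g\bigr]$ with $\partial_kg$ produces $(\eta^2+|g|^2)^{(r-2)/2}|\partial_kg|^2+(r-2)(\eta^2+|g|^2)^{(r-4)/2}(g\cdot\partial_kg)^2$, which — using $r-2\le0$ and $(g\cdot\partial_kg)^2\le|g|^2|\partial_kg|^2\le(\eta^2+|g|^2)|\partial_kg|^2$ — is $\ge(r-1)(\eta^2+|g|^2)^{(r-2)/2}|\partial_kg|^2\ge0$; passing $\eta\to0$ (Fatou as before) and using $|\partial_k(|g|^{r/2})|^2=\tfrac{r^2}{4}|g|^{r-4}(g\cdot\partial_kg)^2\le\tfrac{r^2}{4}|g|^{r-2}|\partial_kg|^2$ gives the stated inequality, the loss of equality being precisely the component of $\partial_kg$ orthogonal to $g$, which is invisible to $|g|^{r/2}$. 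The main obstacle is exactly this limiting argument for $1<r<2$: one must smooth at scale $\eta$, extract the nonnegativity/sign first, obtain integrability from Fatou, and only then invoke dominated convergence; the reduction to $1$D slices and verifying that the slices inherit the required regularity is the only other point needing care.
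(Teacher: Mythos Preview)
Your proof is correct and follows essentially the same strategy as the paper: regularise $|f|$ by $(\eta^2+f^2)^{1/2}$, invoke the measure-zero set $\{f=0,\ \partial_kf\ne0\}$, and use H\"older for \eqref{ineq_lem00}. The paper sketches the same steps more tersely and stays in $\mathbb{R}^n$ rather than slicing to one dimension; the only structural difference is in the vector case, where the paper first argues componentwise that $\partial_k(|g^j|^{r/2})\in L^2$ and then applies the chain rule to $(\sum_j(|g^j|^{r/2})^{4/r})^{r/4}$, whereas you run the regularised integration by parts directly on $g$---both routes are standard and equivalent in difficulty.
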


\begin{rem}
Dividing both sides of the first group of equalities by the factor $(r-1)$ and taking a suitable limit $r\to 1$ (under some natural assumptions on $f$),
one can derive the analogue of the above for the end-point $r=1$ as
\begin{align*}
- \frac 14 \int \partial_{kk} f \operatorname{sgn}(f)  \log |f| dx =
\| \partial_k ( |f|^{\frac 12} ) \|_2^2.
\end{align*}
For a positive function $f$, this exactly corresponds to the flux (Fisher information) of the entropy functional
$\int  (-f \log f)$.  One should note that in this spirit the entropy is a natural limit
of dissipation law for $|f|^{\frac r2}$  as $r\to 1$.  This gives another explanation why
$-f\log f$ should appear as natural monotone quantities.
\end{rem}

\begin{proof}
It is the regime $1<r<2$ which merits a careful analysis. The first equality follows by a careful integration
by parts (using smooth spatial cut-offs and regularising $|f|$ by $(|f|^2+\epsilon^2)^{\frac 12}$)
and the fact that $\{x: \, f(x)=0, \, \partial_k f(x) \ne 0\}$ has Lebesgue measure zero. The second equality is trivial on the set $f \ne 0$. For the third equality, observe  for $\epsilon \to 0+$,
\begin{align*}
&f_{\epsilon} =
(|f|^2+ \epsilon^2)^{\frac r 4} \to |f|^{\frac r 2}, \quad\text{a.e. in $\mathbb R^n$},\\
&\partial_k f_{\epsilon} = \frac r 2 (|f|^2+\epsilon^2)^{\frac r4-1} f \partial_k f, \\
& \| \partial_k f_{\epsilon} \|_2 \le  \frac r2\| \partial_k f \cdot |f|^{\frac r2-1}
\|_{L^2(f\ne 0)} = \| \partial_k (|f|^{\frac r2}) \|_{L^2(f\ne 0)}.
\end{align*}
It follows easily that $\partial_k (|f|^{\frac r2}) \in L^2$, and
\begin{align*}
\| \partial_k ( |f|^{\frac r2} ) \|_2 \le \| \partial_k (|f|^{\frac r2} ) \|_{L^2(f\ne 0)}.
\end{align*}
Hence the equality holds.

For the  inequality \eqref{ineq_lem00}, one recalls that the set $\{x: \, f(x)=0, \, \partial_k f(x) \ne 0\}$ has
Lebesgue measure zero, and hence
\begin{align*}
\int |\partial_k f|^r &= \int_{f\ne 0} |\partial_k f|^r \cdot |f|^{\frac{r(r-2)}2} \cdot |f |^{\frac{r(2-r)}2} \notag \\
& \le ( \int_{f\ne 0} |\partial_k f |^2 |f|^{r-2} )^{\frac r2} \cdot (\int|f|^r dx)^{\frac{2-r}2}.
\end{align*}

For the last inequality (WLOG again assume $1<r<2$), one first notes that $\partial_k ( |g^j|^{\frac r 2} ) \in L^2$ for each
component $g^j$. Thus $\partial_k( |g|^{\frac r2}) =
\partial_k ( (\sum_{j=1}^n (|g^j|^{\frac r2})^{\frac 4r})^{\frac r 4} ) \in L^2$ by
using the chain rule.  The desired inequality then follows by an argument similar to the
scalar case. We omit details.
\end{proof}

For any $1\le p\le \infty$ and measurable $f:\mathbb R^n \to \mathbb R$,
we will use $\|f\|_{L^p(\mathbb R^n)}$, $\|f\|_{L^p}$ or simply
$\|f\|_p$ to denote the usual $L^p$ norm. For a vector valued function
$f=(f^1,\cdots, f^m)$, we still denote $\|f\|_p :=\sum_{j=1}^m \| f^j \|_p$.

For any $0<T<\infty$ and any Banach space $\mathbb B$ with norm $\| \cdot \|_{\mathbb B}$,
we will use the notation $C([0,T], \,\mathbb B)$ or $C_t^0 \mathbb B$
to denote the space of continuous
$\mathbb B$-valued functions endowed with the norm
\begin{align*}
\| f \|_{C([0,T], \mathbb B)} :=\max_{0\le t \le T} \| f(t) \|_{\mathbb B}.
\end{align*}
Also for $1\le p\le \infty$, we define
\begin{align*}
\| f \|_{L_t^p \mathbb B([0,T])} := \|  \| f(t) \|_{\mathbb B} \|_{L_t^p([0,T])}.
\end{align*}

We shall adopt the following convention for the Fourier transform:
\begin{align*}
& \hat f(\xi) = \int_{\mathbb R^n} f(x) e^{-i x\cdot \xi} dx;\\
& f(x)= \frac 1 {(2\pi)^n} \int_{\mathbb R^n} \hat f (\xi) e^{i x \cdot \xi} d\xi.
\end{align*}
For $s\in \mathbb R$, the fractional Laplacian $|\nabla|^s $ then corresponds to the Fourier
multiplier $|\xi|^s$ defined as
\begin{align*}
\widehat{|\nabla|^s f}(\xi) = |\xi|^s \hat f (\xi),
\end{align*}
whenever it is well-defined. For $s\ge 0$, $1\le p<\infty$, we define the semi-norm and norms:
\begin{align*}
&\| f \|_{\dot W^{s,p}} = \| |\nabla|^s f \|_p, \\
& \| f \|_{W^{s,p}} = \| |\nabla|^s f \|_p + \| f \|_p.
\end{align*}
When $p=2$ we denote $\dot H^s =\dot W^{s,2}$ and $H^s= W^{s,2}$ in accordance with
the usual notation.

For any two quantities $X$ and $Y$, we denote $X \lesssim Y$ if
$X \le C Y$ for some constant $C>0$. Similarly $X \gtrsim Y$ if $X
\ge CY$ for some $C>0$. We denote $X \sim Y$ if $X\lesssim Y$ and $Y
\lesssim X$. The dependence of the constant $C$ on
other parameters or constants are usually clear from the context and
we will often suppress  this dependence. We shall denote
$X \lesssim_{Z_1, Z_2,\cdots,Z_k} Y$
if $X \le CY$ and the constant $C$ depends on the quantities $Z_1,\cdots, Z_k$.

For any two quantities $X$ and $Y$, we shall denote $X\ll Y$ if
$X \le c Y$ for some sufficiently small constant $c$. The smallness of the constant $c$ is
usually clear from the context. The notation $X\gg Y$ is similarly defined. Note that
our use of $\ll$ and $\gg$ here is \emph{different} from the usual Vinogradov notation
in number theory or asymptotic analysis.

 We will need to use the Littlewood--Paley (LP) frequency projection
operators. To fix the notation, let $\phi_0$ be a radial function in
$C_c^\infty(\mathbb{R}^n )$ and satisfy
\begin{equation}\nonumber
0 \leq \phi_0 \leq 1,\quad \phi_0(\xi) = 1\ {\text{ for}}\ |\xi| \leq
1,\quad \phi_0(\xi) = 0\ {\text{ for}}\ |\xi| \geq 7/6.
\end{equation}
Let $\phi(\xi):= \phi_0(\xi) - \phi_0(2\xi)$ which is supported in $\frac 12 \le |\xi| \le \frac 76$.
For any $f \in \mathcal S(\mathbb R^n)$, $j \in \mathbb Z$, define
\begin{align*}
 &\widehat{P_{\le j} f} (\xi) = \phi_0(2^{-j} \xi) \hat f(\xi), \\
 &\widehat{P_j f} (\xi) = \phi(2^{-j} \xi) \hat f(\xi), \qquad \xi \in \mathbb R^n.
\end{align*}
We will denote $P_{>j} = I-P_{\le j}$ ($I$ is the identity operator)
and for any $-\infty<a<b<\infty$, denote
$P_{[a,b]}=\sum_{a\leq j\leq b}P_j$.  Sometimes for simplicity of
notation (and when there is no obvious confusion) we will write $f_j = P_j f$, $f_{\le j} = P_{\le j} f$ and
$f_{a\le\cdot\le b} = \sum_{j=a}^b f_j$.
By using the support property of $\phi$, we have $P_j P_{j^{\prime}} =0$ whenever $|j-j^{\prime}|>1$.

Sometimes it is convenient to use ``fattened" Littlewood-Paley projection operators $\widetilde P_j$
and $\widetilde P_{\ll j}$ defined by
\begin{align*}
&\widehat{\widetilde P_j f} (\xi) =  \phi_1(2^{-j} \xi) \hat f(\xi), \\
&\widehat{\widetilde P_{\ll j} f}(\xi) = \phi_2(2^{-j} \xi) \hat f(\xi),
\end{align*}
where $\phi_1$, $\phi_2\in C_c^{\infty}$ has support in $\{|\xi| \sim 1\}$ and
$\{|\xi| \ll 1\}$ respectively.  As a model case one can consider
$\operatorname{supp}(\phi_1) \subset \{\frac 12 \le |\xi| \le 2\}$ whereras
$\operatorname{supp}(\phi_2) \subset \{ |\xi| \le \frac 14\}$.  The precise numerology does
not play much role in the following computations and estimates as long as their supports
stay well separated.

In section $5$ we will use the following simple (yet powerful) lemma which gives trilinear
para-product decomposition of product of functions. To simplify the notation we shall
write $\int_{\mathbb R^n } (\cdot) dx $ simply as $\int (\cdot)$.
\begin{lem}[Trilinear paraproduct decomposition] \label{lem_trilinear}
For any $f,g,h\in \mathcal{S}(\mathbb{R}^n)$, we have
\begin{align*}
\int fgh =\sum_j\int \big(&f_j g_{[j-3,j+3]}h_{[j-10,j+5]}+f_j g_{[j-3,j+3]}h_{< j-10}\\
          &+f_j g_{<j-3}h_{[j-2,j+2]}
          +f_{<j-3} g_{j}h_{[j-2,j+2]}\big).
\end{align*}
To simplify the notation, we write the above as
\begin{align*}
\int fgh =\sum_j\int \big(&\widetilde{P}_jf \widetilde{P}_jg \widetilde{P}_jh
          +\widetilde{P}_j f \widetilde{P}_jg \widetilde{P}_{\ll j}h\\
          &+\widetilde{P}_jf \widetilde{P}_{\ll j}g \widetilde{P}_jh+\widetilde{P}_{\ll j}f \widetilde{P}_jg \widetilde{P}_jh\big)
\end{align*}
where $\widetilde{P}_j$ and $\widetilde{P}_{\ll j}$ have frequency localized to $\{|\xi|\sim 2^j\}$ and $\{|\xi|\ll 2^j\}$, respectively.
\end{lem}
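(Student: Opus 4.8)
The plan is to deduce the trilinear identity from the elementary product (Bony-type) decomposition of $fg$ together with a purely Fourier-support bookkeeping for the remaining factor $h$. Since $f,g,h\in\mathcal{S}(\mathbb{R}^n)$, the partial sums $\sum_{|a|\le N}f_a$ converge to $f$ in $\mathcal{S}$ (the low-frequency tail $P_{\le -M}f$ has Fourier multiplier $\phi_0(2^M\xi)$, supported in a ball of radius $\to 0$, hence $\to 0$ in $\mathcal{S}$ against the Schwartz function $\hat f$), and likewise for $g$ and $h$; consequently all the series below converge absolutely in every $L^q$, and every rearrangement, Fubini and Plancherel invocation is automatically legitimate. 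I will not dwell on this.

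\textbf{Step 1: decomposition of the product.} Expanding $fg=\sum_{a,b\in\mathbb{Z}}f_a g_b$ and partitioning $\mathbb{Z}^2$ into the three sets $\{b<a-3\}$, $\{|a-b|\le 3\}$, $\{b>a+3\}$ (on the last set summing in $b$ first, writing $\sum_{a<b-3}f_a=f_{<b-3}$, and relabelling $b\mapsto j$) yields the exact identity
\[
 fg=\sum_j\Big(f_j\,g_{<j-3}+f_j\,g_{[j-3,j+3]}+f_{<j-3}\,g_j\Big).
\]
Multiplying by $h$ and integrating,
\[
 \int fgh=\sum_j\int f_j\,g_{<j-3}\,h+\sum_j\int f_j\,g_{[j-3,j+3]}\,h+\sum_j\int f_{<j-3}\,g_j\,h .
\]

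\textbf{Step 2: localizing $h$.} In the first sum, $\operatorname{supp}\widehat{f_j g_{<j-3}}$ lies in an annulus $\{c_1 2^{j}\le|\xi|\le c_2 2^{j}\}$ with $\tfrac14<c_1$ and $c_2<2$, because $\widehat{g_{<j-3}}$ is supported in $\{|\xi|\le \tfrac{7}{48}2^{j}\}$, well inside the shell $\{\tfrac12 2^{j}\le|\xi|\le\tfrac76 2^{j}\}$ carrying $\widehat{f_j}$; on such an annulus $\widehat{h_{[j-2,j+2]}}=\hat h$, so by Plancherel $\int f_j g_{<j-3}h=\int f_j g_{<j-3}h_{[j-2,j+2]}$. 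The third sum is handled identically by the symmetry $f\leftrightarrow g$, giving $\int f_{<j-3}g_j h_{[j-2,j+2]}$. For the middle sum, $\operatorname{supp}\widehat{f_j g_{[j-3,j+3]}}\subset\{|\xi|\le(\tfrac76+\tfrac76\cdot 8)2^{j}\}\subset\{|\xi|<2^{j+5}\}$ (now with no useful lower bound), so $\widehat{h_{\le j+5}}=\hat h$ there while $h_{>j+5}$ contributes nothing; hence $\int f_j g_{[j-3,j+3]}h=\int f_j g_{[j-3,j+3]}h_{\le j+5}$, and splitting $h_{\le j+5}=h_{<j-10}+h_{[j-10,j+5]}$ produces exactly the first two terms of the asserted identity. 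Collecting the four resulting terms completes the proof.

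\textbf{Main point to watch.} There is no genuine obstacle here — the whole content is verifying that the chosen index offsets ($3,2,10,5$) are compatible with the actual Fourier supports: that $h_{[j-2,j+2]}$ (resp.\ $h_{\le j+5}$) truly reproduces $\hat h$ on the spectrum of the corresponding bilinear factor, and that the discarded tail $h_{>j+5}$ has spectrum disjoint from that of $f_j g_{[j-3,j+3]}$. All of these reduce to slack numerical inequalities such as $\tfrac76\cdot 9<2^{5}$ together with the annulus bounds above, with comfortable room to spare; in particular the split point $j-10$ in the middle term is essentially arbitrary and could be replaced by any $j-N$ with $N\ge 1$. One could equally phrase Step 2 via $P_jP_{j'}=0$ for $|j-j'|>1$ applied to the relevant products, but the direct spectral-support computation is the most transparent.
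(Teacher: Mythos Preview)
Your proof is correct and follows essentially the same route as the paper's: both perform the Bony-type trichotomy on the pair $(f,g)$ and then use Fourier-support considerations to localize $h$ in each piece, with the only cosmetic difference being that you carry out the resummation $\sum_j f_j g_{>j+3}=\sum_j f_{<j-3}g_j$ before multiplying by $h$ while the paper does it after. One minor slip worth noting: your claim that $P_{\le -M}f\to 0$ in $\mathcal{S}$ is false when $\hat f(0)\ne 0$ (the sup-norm of $\phi_0(2^M\xi)\hat f(\xi)$ stays near $|\hat f(0)|$), but convergence in $L^2$ is immediate and entirely sufficient for the integral identity, so the argument is unaffected.
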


\begin{proof}
By frequency localization, we have
\begin{align*}
      \int fgh
&=\sum_j\big(\int f_jg_{[j-3,j+3]}h+\int f_jg_{<j-3}h +\int f_jg_{>j+3}h\big)\\
&=\sum_j \int f_j g_{[j-3,j+3]} h_{[j-10,j+5]}
    +\sum_j \int f_j g_{[j-3,j+3]} h_{<j-10}\\
    &\quad+\sum_j \int f_j g_{<j-3} h_{[j-2,j+2]}+\sum_k \int f_{<k-3} g_k h.
\end{align*}
Writing the last term as
$$\sum_j \int f_{<j-3} g_{j} h_{[j-2,j+2]}$$
then yields the result.
\end{proof}

\section{Reduction to $\omega$}\label{sec_BU}

In this section we establish a non-blowup criterion involving only $v^3$ and the horizontal
vorticity $\omega = -\partial_2 v^1 + \partial_1 v^2$.
\begin{prop}\label{vH1}
  Let $0<T<\infty$ and $v \in C_t^0 \dot H^{\frac 12}([0,T) ) \cap L_t^2 \dot H^{\frac 32}([0,T))
  $ be a local mild solution to system \eqref{e1.1} with $v_0 \in \dot H^{\frac 12}$. Let
   $p\in [2,\infty)$. Assume that
  \begin{equation}\nonumber
   M(T) = \int_0^{T}\| \omega\|^p_{\dot H^{-\frac 12+\frac 2p}}dt+\int_0^{T} \|v^3\|^p_{\dot{H}^{\frac12+\frac2p}}dt < \infty.
  \end{equation}
 Then the local solution $v$ can be continued past $T$ and remains regular on $(0, T+\delta]$  for some $\delta>0$. For any $0<t_0<T$,
 $$\max_{t_0 \leq t \leq T}\|v\|_{\dot H^1} + \|\nabla v\|_{L^2([0, T],  \dot H^1)} \le2 \|v(t_0)\|_{\dot H^1} \cdot e^{\operatorname{const}\cdot M(T)} < \infty.$$
 Furthermore if $\Omega_0= \nabla \times v_0 \in L^{r_0}$ for some $r_0 \in (1,2]$,  then
 $\Omega \in C([0,T], L_x^{r_0})$, and for any $0<t_0<T$,
 \begin{align*}
 \sup_{t_0\le t\le T} ( \| \nabla \Omega(t) \|_{L^{r_0}} + \| \nabla^4 \Omega(t) \|_{L^{r_0}}) <\infty.
 \end{align*}
\end{prop}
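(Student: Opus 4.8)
The plan is to run a continuation argument driven by an a priori bound for $\|v(t)\|_{\dot H^1}$, and then to upgrade this to the stated vorticity bounds. First I would invoke the parabolic smoothing of the local mild solution: for any $t_0\in(0,T)$ one has $v(t_0)\in\bigcap_{m\ge 0}\dot H^m$ and $\Omega(t_0)\in\bigcap_{m\ge0}W^{m,r_0}$, so after shifting the time origin it suffices to treat the case $v_0\in\dot H^{1/2}\cap\dot H^1$, $\Omega_0\in W^{4,r_0}$, for which the solution is smooth on its maximal interval $[0,T_{\max})$ and every manipulation below is legitimate. It then suffices to control $\|v\|_{\dot H^1}$ uniformly on $[0,T_{\max})$, since a uniform $\dot H^1$ bound rules out finite-time blow-up (the local existence time from $\dot H^1$ data is bounded below in terms of the $\dot H^1$ norm), whence $T_{\max}>T$. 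Pairing \eqref{e1.1} with $-\Delta v$ and using $\operatorname{div}v=0$ to discard both the pressure and $\int v^i\partial_i(\tfrac12|\nabla v|^2)$ yields the identity
\begin{equation*}
\tfrac12\tfrac{d}{dt}\|\nabla v\|_2^2+\|\Delta v\|_2^2=\mathcal N(t),\qquad \mathcal N=-\int_{\mathbb R^3}\partial_k v^i\,\partial_i v^j\,\partial_k v^j\,dx.
\end{equation*}

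\textbf{The nonlinear estimate (the crux).} The heart of the proof is to bound $\mathcal N$ using only $\|\omega\|_{\dot H^{-1/2+2/p}}$, $\|v^3\|_{\dot H^{1/2+2/p}}$ and the energy quantities $\|\nabla v\|_2$, $\|\Delta v\|_2$. For each cubic monomial $\partial_k v^i\,\partial_i v^j\,\partial_k v^j$ I would single out one distinguished factor whose $\dot H^{-1/2+2/p}$ norm is controlled by the two critical quantities: if $i=3$ or $j=3$ then $\partial_k v^3$ is such a factor, since $\|\partial_k v^3\|_{\dot H^{-1/2+2/p}}\le\|v^3\|_{\dot H^{1/2+2/p}}$; and if $i,j\in\{1,2\}$ then $\partial_i v^j$ is a component of $\nabla_h v^h$, which by the horizontal Biot--Savart law \eqref{vhdecompose} equals $R_h\omega+R_h\partial_3 v^3$ with $R_h$ a Fourier multiplier of the horizontal variables only (a horizontal Calderón--Zygmund operator, hence bounded on $\dot H^s(\mathbb R^3)$ for every $s$), so $\|\partial_i v^j\|_{\dot H^{-1/2+2/p}}\lesssim\|\omega\|_{\dot H^{-1/2+2/p}}+\|v^3\|_{\dot H^{1/2+2/p}}$. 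The key is that in every case the remaining two factors are plain first derivatives of $v$, so no anisotropic operator ever acts on them. Writing $A$ for the distinguished factor and $B,C$ for the other two, I would estimate $|\int ABC|\le\|A\|_{\dot H^{-1/2+2/p}}\|BC\|_{\dot H^{1/2-2/p}}\lesssim\|A\|_{\dot H^{-1/2+2/p}}\|B\|_{\dot H^{1-1/p}}\|C\|_{\dot H^{1-1/p}}$, where the relevant trilinear Sobolev inequality on $\mathbb R^3$ applies because the exponents $(-\tfrac12+\tfrac2p)+(1-\tfrac1p)+(1-\tfrac1p)=\tfrac32$ — this is exactly the reflection of the scaling invariance of $I_p$ — and then interpolate $\dot H^{1-1/p}$ between $\dot H^0$ and $\dot H^1$ to get $\|B\|_{\dot H^{1-1/p}}\|C\|_{\dot H^{1-1/p}}\lesssim\|\nabla v\|_2^{2/p}\|\Delta v\|_2^{2-2/p}$. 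A Young inequality with exponents $(\tfrac{p}{p-1},p)$ absorbs $\|\Delta v\|_2^{2-2/p}$ into $\tfrac12\|\Delta v\|_2^2$ and leaves
\begin{equation*}
|\mathcal N|\le \tfrac12\|\Delta v\|_2^2+C\big(\|\omega\|_{\dot H^{-1/2+2/p}}^p+\|v^3\|_{\dot H^{1/2+2/p}}^p\big)\|\nabla v\|_2^2 .
\end{equation*}
Inserting this into the energy identity and applying Gronwall on $[t_0,t]$ gives $\|\nabla v(t)\|_2^2\le\|\nabla v(t_0)\|_2^2\,e^{\operatorname{const}\cdot M(T)}$, and integrating the differential inequality then controls $\int_{t_0}^T\|\Delta v\|_2^2$; since $M(T)<\infty$ this is the asserted bound, $T_{\max}>T$, and by continuity of $\|v(t)\|_{\dot H^{1/2}}$ at $T$ together with the local theory the solution stays in $\bigcap_m\dot H^m$ on $(0,T+\delta]$.

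\textbf{Vorticity propagation.} Finally I would run an $L^{r_0}$ estimate for $\Omega=\nabla\times v$, which solves $\partial_t\Omega+v\cdot\nabla\Omega-\Delta\Omega=\Omega\cdot\nabla v$. Testing against $|\Omega|^{r_0-2}\Omega$: the transport term vanishes by incompressibility, the diffusion produces a coercive $\|\nabla(|\Omega|^{r_0/2})\|_2^2$ (Lemma~\ref{lem00}), and the stretching term is bounded by $\int|\nabla v|\,|\Omega|^{r_0}$. By Hölder, the Sobolev embedding $\dot H^1\hookrightarrow L^6$ applied to $|\Omega|^{r_0/2}$, and interpolation one gets $\int|\nabla v||\Omega|^{r_0}\lesssim\|\nabla v\|_{L^3}\,\|\Omega\|_{r_0}^{r_0/2}\,\|\nabla(|\Omega|^{r_0/2})\|_2$, and a further Young inequality absorbs the last factor. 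Since $v\in L^\infty_t\dot H^1\cap L^2_t\dot H^2$ (already obtained) gives $\nabla v\in L^4_tL^3$, a Gronwall in $\|\Omega\|_{r_0}^{r_0}$ with the weight $\|\nabla v\|_{L^3}^2\in L^1_t$ yields $\Omega\in L^\infty_tL^{r_0}$; continuity $\Omega\in C_tL^{r_0}$ and the higher bounds $\nabla\Omega,\nabla^4\Omega\in L^\infty_tL^{r_0}$ on $[t_0,T]$ then follow from the Duhamel representation and standard parabolic bootstrapping, using the control of $\|v\|_{\dot H^1\cap\dot H^2}$.

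\textbf{Main obstacle.} The delicate step is the nonlinear estimate in the second paragraph: one must organise every cubic term so that the \emph{single} factor carrying negative or critical Sobolev regularity is always $\omega$ or $\nabla v^3$ — never a genuinely anisotropic object like $|\nabla_h|^{-1}\partial_3\omega$ — and simultaneously make the interpolation exponents land exactly on the $p$-th powers appearing in $M(T)$, which works only because both controlling norms are scaling invariant; getting this bookkeeping right for the whole range $p\in[2,\infty)$ is the real content. A lesser but genuine nuisance is that $\nabla v\notin L^1_tL^\infty$, so the $L^{r_0}$ vorticity estimate must exploit the diffusive gain $|\Omega|^{r_0/2}\in L^2_t\dot H^1$ rather than a naive Gronwall with $\|\nabla v\|_\infty$.
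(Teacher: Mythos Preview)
Your proposal is correct and follows essentially the same route as the paper: the same $\dot H^1$ energy identity, the same decomposition of the cubic nonlinearity according to whether a factor involves $v^3$ directly or (via the horizontal Biot--Savart law) $\omega$ and $\partial_3 v^3$, the same interpolation/Young/Gronwall closure, and the same $L^{r_0}$ estimate on $\Omega$ via the stretching term and Lemma~\ref{lem00}. The paper additionally runs a short $\dot H^{1/2}$ energy estimate and spells out the Duhamel argument for continuity $\Omega\in C_tL^{r_0}$ at $t=0$ (splitting the cases $r_0\in(1,\tfrac32)$ and $r_0\in[\tfrac32,2)$), but these are routine complements to your sketch.
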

\begin{rem}
For $p\in [2,4]$, one can replace $\|\omega\|_{\dot H^{-\frac 12+\frac 2p}}$
by the weaker norm $\| |\nabla_h|^{-\frac 12+\frac 2p} \omega\|_{L^2}$.
\end{rem}
\begin{rem} \label{rem3.3}
For $p\in (4,\infty)$, one can replace the quantity $M(T)$ by
\begin{align*}
\widetilde M(T)= T \cdot (1+\sup_{0\le t\le T} \|\omega (t)\|_r)^p +
 \int_0^T \| v^3 \|_{\dot H^{\frac 12 +\frac 2p} }^p dt,
 \end{align*}
where $r$ satisfies $\frac 12 <\frac 1r <\frac 23 (1-\frac 1p)$.  The implied constants in the Gronwall will also depend on $r$ but we shall suppress this dependence. The proof is a simple modification
of the corresponding argument  for $M(T)$. By examining the estimate
of $K_3$ in the proof below, it is clear that
\begin{align*}
|K_3| & \lesssim \| P_{<1} \mathcal R_2(\omega) \|_{r} \| \widetilde P_{<1} ( \partial v
\cdot \partial v ) \|_{\frac r{r-1}} +
\| |\nabla|^{-\frac 12+\frac 2p}
P_{\ge 1} \mathcal R_2(\omega) \|_2 \cdot \| |\nabla|^{\frac 12-\frac 2p} (\partial v
\cdot \partial v) \|_2 \notag \\
& \lesssim \, \|\omega \|_r \cdot \| \nabla v \|_{2}^2 + \|\omega\|_r
\cdot  \| \Delta v\|_2^{2-\frac 2p}
\cdot \| \nabla v\|_2^{\frac 2p} \notag \\
& \le \frac 1 {100} \|\Delta v \|_2^2 + C \cdot
(\| \omega\|_r+ \|\omega\|_r^p) \cdot \| \nabla v\|_2^2.
\end{align*}
A Gronwall argument then concludes the estimates.
\end{rem}
\begin{proof}
By using smoothing estimates we may assume without loss of generality that $t_0=0$
and $v_0 \in \dot H^{\frac 12} \cap \dot H^1$.  We first control $\| v\|_{\dot H^1}$. Applying the spatial derivative $\nabla$ to the Navier-Stokes equations \eqref{e1.1}, and then
taking the $L^2$ inner product of the resulting equations with $\nabla v$,  we have
\begin{align} \notag  
  \begin{split}
   &\frac12\frac{d}{dt}\|\nabla v\|_{L^2}^2
    +\|\Delta v\|_{L^2}^2
    =-\int_{\bR^3}(\partial_iv\cdot\nabla) v\cdot\partial_i vdx\\
    &=-\int_{\bR^3}\partial_iv^3\partial_3v\cdot\partial_i vdx
    -\int_{\bR^3}\partial_iv^h\partial_h v^3\partial_i v^3dx-\int_{\bR^3}\partial_iv^h\partial_h v^{\tilde{h}}\partial_i v^{\tilde{h}}dx\\
    &=K_1+K_2+K_3.
 \end{split}
\end{align}
Here we used Einstein's convention over repeated indices. We emphasis that throughout this paper, the summation over $i$ is always from 1 to 3, but the summation over $h$ and $\tilde{h}$ are always from 1 to 2.

We first estimate $K_1$ and $K_2$.
Clearly for $2\le p\le 4$ (note that $-\frac 12+\frac 2p \ge 0$):
\begin{align*}
|K_1|+|K_2| & \lesssim \| \nabla v^3 \|_{\frac {3p}{2p-2} } \| \nabla v\|_{\frac {6p} {p+2} }^2
\notag \\
& \lesssim \| \nabla v^3 \|_{\dot H^{-\frac 12 +\frac 2p}}
\cdot \| |\nabla|^{1-\frac 1p} \nabla v \|_2^2 \notag \\
& \lesssim \| v^3 \|_{\dot H^{\frac 12 +\frac 2p}}
\| \nabla v \|_2^{\frac 2p} \cdot \| \Delta v \|_2^{2-\frac 2p} \notag \\
&\le \; \frac1{16}\|\Delta v\|_{L^2}^{2}+C\|v^3\|^p_{\dot{H}^{\frac12+\frac2p}}\|\nabla v\|^2_{L^2}.
\end{align*}
Here and below, $C$ represents a constant whose value may change from line to line. On the other
hand for $4<p<\infty$,  noting that $\frac 12 -\frac 2p >0$, we have
\begin{align*}
|K_1|+|K_2|  &\lesssim \| |\nabla|^{-\frac 12+\frac 2p} \partial v^3 \|_2
\cdot \| |\nabla|^{\frac 12-\frac 2p} ( \partial v \cdot \partial v) \|_2 \notag \\
&\lesssim \| v^3 \|_{\dot H^{\frac 12+\frac 2p}}
\cdot \| |\nabla|^{\frac 12-\frac 2p} \partial v\|_3 \cdot \| \partial v \|_6 \notag \\
& \lesssim \| v^3 \|_{\dot H^{\frac 12+\frac 2p}}
\cdot \| \Delta v\|_2 \cdot \| |\nabla|^{1-\frac 2p} \partial v\|_2 \notag \\
& \lesssim \| v^3 \|_{\dot H^{\frac 12+\frac 2p}} \cdot \| \Delta v\|_2^{2-\frac 2p}
\cdot \| \nabla v\|_2^{\frac 2p}.
\end{align*}

For $K_3$, one observes that
\begin{align*}
\partial_h v^{\tilde h} = \mathcal R_2( \omega) + \mathcal R_2( \partial_3 v^3),
\end{align*}
where $\mathcal R_2$ is a two-dimensional Riesz transform.  These terms can
 be estimated in a similar way as in $K_1$ and $K_2$ by using Sobolev norm $\|\cdot\|_{
 \frac {3p}{2p-2}}$ for $2\le p\le 4$ and fractional operator $|\nabla|^{-\frac 12+\frac  2p}$
 for $4<p<\infty$.

Collecting the estimates, we obtain
\begin{equation*}
  \begin{split}
   \frac{d}{dt}\|\nabla v\|_{L^2}^2
    +\|\Delta v\|_{L^2}^2
    \leq C  \|\omega \|^p_{\dot{H}^{-\frac12+\frac2p}}\|\nabla v\|^2_{L^2}
    +C\|v^3\|^p_{\dot{H}^{\frac12+\frac2p}}\|\nabla v\|^2_{L^2}.
 \end{split}
\end{equation*}
Then, the Gronwall inequality gives, for any $0<T_1<T$,
\begin{equation}\nonumber
  \begin{split}
   &\|\nabla v(T_1)\|_{L^2}^2
    +\int_0^{T_1}\|\Delta v(t)\|_{L^2}^2dt\\
    \leq &\|\nabla v_0\|_{L^2}^2\exp\left
    (C\int_0^T\|\omega \|^p_{\dot{H}^{-\frac12+\frac2p}}dt+C\int_0^T \|v^3\|^p_{\dot{H}^{\frac12+\frac2p}}dt\right).
 \end{split}
\end{equation}
On the other hand, for the $\|v \|_{\dot H^{\frac 12}}$-norm, we have
\begin{align}
\frac 12 \frac d {dt} ( \| v \|_{\dot H^{\frac 12}}^2) + \| \nabla |\nabla|^{\frac 12} v
\|_2^2 & \le \| v \|_6 \| \nabla v \|_3 \| |\nabla| v\|_2 \notag \\
 & \lesssim \| \nabla v \|_2^2 \| v \|_{\dot H^{\frac 32}} \notag \\
 & \le \frac 18 \| \nabla |\nabla|^{\frac 12} v \|_2^2 + C \| \nabla v \|_2^4. \notag
 \end{align}
This then easily yields the control of $\| v \|_{\dot H^{\frac 12}}$. Since we have uniform
estimates on $\| v\|_{\dot H^{\frac 12}} + \|v \|_{\dot H^{1}}$ on the time interval
$[0,T)$, the solution $v$ can be continued past $T$.

Finally we show continuity of $\Omega= \nabla \times v$  in $L^{r_0}$ norm.  First we show $\Omega
\in L_t^{\infty} L_x^{r_0} ([0,T])$. Consider the vorticity
equation:
\begin{align*}
\partial_t \Omega + (v \cdot \nabla) \Omega = \Delta \Omega +( \Omega \cdot \nabla) v.
\end{align*}
Clearly
\begin{align*}
\frac 1 {r_0} \frac d {dt} ( \| \Omega\|_{r_0}^{r_0} )
+\frac {4(r_0-1)} {r_0^2}  \| \nabla ( |\Omega|^{\frac {r_0} 2} ) \|_2^2
& \lesssim  \| \nabla v\|_3 \cdot \| |\Omega|^{r_0} \|_{\frac 32} \notag \\
& \lesssim \| v\|_{\dot H^{\frac 32}}  \| |\Omega|^{\frac{r_0} 2} \|_2
\cdot \| \nabla ( |\Omega|^{\frac {r_0}2 } ) \|_2 \notag \\
& \le C \| v \|_{\dot H^{\frac 32}}^2  \| \Omega\|_{r_0}^{r_0}
+ \frac{r_0-1}{r_0^2}  \| \nabla( |\Omega|^{\frac{r_0}2} ) \|_2^2.
\end{align*}
Since we have shown $\|v \|_{L_t^2 \dot H^{\frac 32} ([0,T])} <\infty$, the Gronwall inequality
then easily yields
\begin{align*}
\| \Omega \|_{L_t^{\infty} L_x^{r_0} ([0,T])} +
\| \nabla( |\Omega|^{\frac {r_0}2}) \|_{L_t^2 L_x^2([0,T])} <\infty.
\end{align*}
Now to show continuity in $L^{r_0}$ norm we shall only check the (right) continuity at $t_0=0$.
The continuity at each positive time $t_0\in (0,  T]$ is easier (and omitted) thanks to the usual smoothing effect. For the continuity at $t_0=0$ we only need
to examine the integrals:
\begin{align*}
\int_0^t e^{(t-s) \Delta}  (\Omega \cdot \nabla v)(s)ds,
\quad \text{and } \int_0^t   e^{(t-s)\Delta}  (v \cdot  \nabla \Omega) (s) ds.
\end{align*}

Consider the first integral. We discuss two cases.

Case 1: $\frac 32 \le r_0<2$. Clearly
\begin{align*}
\| \int_0^t e^{(t-s) \Delta}
(\Omega \cdot \nabla v)(s) ds \|_{r_0}
& \lesssim \int_0^t \| \Omega(s) \|_{2r_0}^2 ds  \lesssim
 \int_0^t \| |\Omega |^{\frac{r_0} 2} \|_4^{\frac 4 {r_0}} ds  \notag \\
& \lesssim
\int_0^t \| |\Omega|^{\frac {r_0} 2} \|_2^{\frac 1 {r_0}}
\cdot \| \nabla( |\Omega|^{\frac {r_0} 2} ) \|_2^{\frac 3 {r_0}} ds.
\end{align*}
Since $3/r_0\le 2$, the above clearly tends to zero as $t\to 0+$.

Case 2: $1<r_0<\frac 32$. We have
\begin{align*}
\| \int_0^t e^{(t-s) \Delta}
(\Omega \cdot \nabla v)(s) ds \|_{r_0}
& \lesssim \int_0^t  \| \Omega(s) \|_{2r_0}^2 ds
\lesssim \int_0^t \| |\nabla|^{\frac 52 -\frac 3 {2r_0}} v(s) \|_2^2 ds.
\end{align*}
Since $1< \frac 52-\frac 3{2r_0} <\frac 32$ and $v \in C_t^0 \dot H^{\frac 12}
\cap L_t^2 \dot H^{\frac 32}$, the last integral above also tends to zero as $t\to 0+$.

Now  we consider the integral $\int_0^t   e^{(t-s)\Delta}  (v \cdot  \nabla \Omega) (s) ds$.
By using the property of the mild solution $v$, namely $\lim_{s \to 0+} s^{\frac 12 }
\| v(s) \|_{L_x^{\infty}} =0$, we have (below we also used $\nabla \cdot v=0$)
\begin{align*}
&\|\int_0^t   e^{(t-s)\Delta}  (v \cdot  \nabla \Omega) (s) ds\|_{r_0} \notag \\
& \lesssim \int_0^{\frac t2} (t-s)^{-\frac 12} s^{-\frac 12}
\cdot s^{\frac 12} \| v(s) \|_{\infty} \| \Omega(s) \|_{r_0} ds
+ \int_{\frac t2}^t  s^{-\frac 12} \cdot s^{\frac 12} \|v(s)\|_{\infty}
\| \nabla \Omega (s) \|_{r_0} ds \notag \\
& \lesssim \|s^{\frac 12} v(s) \|_{L_s^{\infty} L_x^{\infty} ([0,t])}
\cdot (\|\Omega (s) \|_{L_s^{\infty} L_x^{r_0} ([0,t])}
+ \| \nabla \Omega(s) \|_{L_s^2 L_x^{r_0} ([0,t])} ) \to 0,
\end{align*}
as $t$ tends to $0+$.  Note that here in the last step we used $$\| \nabla \Omega\|_{r_0}
\lesssim \| \nabla ( |\Omega |^{\frac {r_0} 2} ) \|_2 \cdot \| \Omega\|_{r_0}^{1-\frac{r_0}2}
\lesssim \| \nabla( |\Omega|^{\frac {r_0} 2})\|_2,$$
 which is $L^2$ integrable in time for $1<r_0\le 2$. This finishes the proof
 of $\Omega \in C([0,T], L_x^{r_0})$.

 Finally we note that the estimate for $\nabla \Omega $ is trivial in view of the smoothing effect.
 We omit details.
\end{proof}


\section{Estimate of $\omega$: case $2\le p\le 4$}
In this section we first give the estimate of the horizontal vorticity $\omega$
for the case $2\le p\le 4$. Recall that  $\omega$ satisfies the following equation
\begin{equation}\label{b1}
\partial_t\omega+(v\cdot\nabla) \omega-\Delta \omega=\partial_3 v^3 \omega+
\partial_2v^3\partial_3v^1-\partial_1v^3\partial_3v^2.
\end{equation}
Taking the $L^2$ inner product of equation \eqref{b1} with $\omega|\omega|^{r-2}$, one
has
\begin{align}\label{b2}
\begin{split}
   &\frac{1}{r}\frac{d}{dt}\|\omega\|_{L^r}^{r}
    +\frac{4(r-1)}{r^2}\|\nabla |\omega|^{\frac r2}\|_{L^2}^2\\
  =&\int\partial_3v^3|\omega|^{r}dx +\int\partial_2v^3\partial_3v^1 \omega|\omega|^{r-2}dx -\int\partial_1v^3\partial_3v^2 \omega|\omega|^{r-2}dx\\
  =:&\;I_1+I_2+I_3.
   \end{split}
\end{align}

Let us first estimate the term $I_1$. According to H\"{o}lder  and interpolation inequalities, we have
\begin{align*}
I_1&=\int\partial_3v^3|\omega|^{r}dx\\
&\leq \|\partial_3v^3\|_{L^{\frac{3p}{2(p - 1)}}}\||\omega|^r\|_{L^{\frac{3p}{p + 2}}}\\
&\lesssim \|\partial_3v^3\|_{\dot{H}^{- \frac{1}{2} + \frac{2}{p}}}
\| |\omega|^{\frac{r}{2}} \|^2_{L^{\frac{6p}{p + 2}}}\\
&\lesssim  \|v^3\|_{\dot{H}^{\frac12 + \frac{2}{p}}} \| |\omega|^{\frac{r}{2}} \|^{\frac{2}{p}}_{L^2}
\| \nabla |\omega|^{\frac r2} \|^{2-\frac{2}{p}}_{L^2}.
\end{align*}

The estimate of $I_3$ is similar to $I_2$ and therefore will be omitted. In what follows,  we will focus on the
estimate of $I_2$. Using the decomposition of $v^h$ which is introduced in the introduction (\ref{vhdecompose}), $I_2$ can be rewritten as
\begin{equation}\nonumber
\begin{split}
 I_2&=\int_{\bR^3}\partial_2v^3\partial_3v^1 \omega|\omega|^{r-2}dx\\
 &=-\int_{\bR^3}\partial_2v^3\partial_1\Delta_h^{-1}\partial^2_3v^3 \omega|\omega|^{r-2}dx
 -\int_{\bR^3}\partial_2v^3\partial_2\Delta_h^{-1}\partial_3\omega \omega|\omega|^{r-2}dx\\
 &=: \,I_{21}+I_{22}.
 \end{split}
\end{equation}
Before continuing the estimates, we collect below some useful notation and conventions.

 \textbf{Notation}:
\begin{itemize}
\item For each fixed $2\le p<\infty$, we shall take $r<2$ sufficiently close to $2$.  The explicit
requirement on $r$ can be worked out but for simplicity we shall often suppress it.
We denote
\begin{align*}
\delta = \delta(r) = \frac 3r -\frac 32 >0.
\end{align*}
\item For a scalar function $f=f(x_1,x_2,x_3)$ and $1\le p,q\le \infty$ we use the mixed norm
 notation
\begin{align*}
\| f \|_{L_v^q L_h^p} :=  \Bigl\|      \| f(x_h, x_3 ) \|_{L^p_{x_h}(\mathbb R^2)}
\Bigr\|_{L^q_{x_3}(\mathbb R)}.
\end{align*}
The notation $\| f\|_{L_h^p L_v^q}$ is similarly defined.

\item We use $\nabla$ or $\partial=(\partial_1,\partial_2,\partial_3)$ to denote the usual gradient
operator. Occasionally we also use $\partial^2$ to denote the whole collection
of second order operators $ (\partial_i \partial_j )_{1\le i,j \le 3}$. By Fourier transform,
it is easy to check that
\begin{align*}
& \| |\nabla|^{1-\delta} f \|_2 \le
\| |\nabla_h|^{-\delta} |\nabla | f \|_2 \sim \| |\nabla_h|^{-\delta} \partial f \|_2
\sim \sum_{j=1}^3 \| |\nabla_h|^{-\delta} \partial_j f \|_2, \\
& \| |\nabla|^{2-\delta} f \|_2 \le
\| |\nabla_h|^{-\delta} \Delta f \|_2 \sim
 \| |\nabla_h|^{-\delta} \partial^2 f \|_2
\sim \sum_{i,j=1}^3 \| |\nabla_h|^{-\delta} \partial_i \partial_j f \|_2.
\end{align*}
We will often use these inequalities without explicit mentioning.

\item In various interpolation inequalities we shall use the letter $\epsilon$ to denote a sufficiently small positive constant whose smallness is clear from the context. Such notation is quite useful
in handling certain end-point situations. For example instead of estimating $\| v^3 \|_{L_h^2 L_v^{\infty}}$ we can estimate the scaling-equivalent quantity
$\| v^3 \|_{L_h^{\frac 2 {1-\epsilon}} L_v^{\frac 1 {\epsilon}}}$. The latter can be easily
controlled by $\|v^3 \|_{\dot H^{\frac 12}}$ thanks to Sobolev embedding.

\item The relation of the parameters $p$, $r$ and $\epsilon$ is as follows.
First we fix $p \in [2,\infty)$. After that we will choose $r<2$ (depending on $p$) sufficiently
close to $2$. After $r$ is chosen, we will choose $\epsilon$ sufficiently small in the interpolation
inequalities to get around borderline situations.

\item
  By a slight abuse of notation, we will sometimes
write operators such as $\partial_1 (-\Delta_h)^{-1}$ or $\partial_2 (-\Delta_h)^{-1}$
 simply as $|\nabla_h|^{-1}$ as all
estimates below will hold the same for both operators.
\end{itemize}

We now continue the estimates.
For $I_{21}$, when $p\in[2,4)$, we take $r$ sufficiently close to $2$ and satisfy
$$\max\{\frac{p}{2},\frac 32\}<r<2.$$
Applying  H\"{o}lder  and Sobolev, one can deduce that
\begin{align*}
I_{21} &=\int\partial_2v^3 \partial_1\Delta_h^{-1}\partial_3^2v^3\omega|\omega|^{r-2}dx\\
&\leq\|\partial_2v^3\|_{L_h^{(\frac{1}{r}-\frac{\delta}{2})^{-1}}L_v^{(\frac12+\frac1r-\frac2p)^{-1}}}
      \||\nabla_h|^{-1}\partial_3^2v^3\|_{L_h^{\frac{2}{\delta}}L_v^{2}}
      \big\|\omega|\omega|^{r-2}\big\|_{L_h^{\frac{r}{r-1}}L_v^{(\frac2p-\frac1r)^{-1}}}\\
&\lesssim \|v^3\|_{\dot{H}^{\frac12 + \frac{2}{p}}}
        \||\nabla_h|^{-\delta}\partial_3^2 v^3\|_{L^2}
       \| |\omega|^{\frac{r}{2}} \|^{\frac{2(r-1)}{r}}_{L_h^{2}L_v^{\frac{2p(r-1)}{2r-p}}}\\
       &\lesssim \|v^3\|_{\dot{H}^{\frac12 + \frac{2}{p}}}
        \||\nabla_h|^{-\delta}\partial_3^2 v^3\|_{L^2}
       \| |\omega|^{\frac{r}{2}} \|^{1-\frac2r+\frac{2}{p}}_{L^2}
       \| \nabla |\omega|^{\frac r2} \|^{1-\frac{2}{p}}_{L^2},
\end{align*}
where we recall $\delta=3(\frac{1}{r}-\frac{1}{2})$.

When $p=4$, we take $r$ sufficiently close to $2$.  Then
\begin{align*}
I_{21} &=\int\partial_2v^3 \partial_1\Delta_h^{-1}\partial_3^2v^3\omega|\omega|^{r-2}dx\\
&\leq\|\partial_2v^3\|_{L_v^2 L_h^2}
      \||\nabla_h|^{-1}\partial_3^2v^3\|_{L_v^{2}L_h^{\frac{2}{\delta}}}
      \|\omega|\omega|^{r-2}\|_{L_v^{\infty} L_h^{\frac{2}{1-\delta}}}\\
&\lesssim \|v^3\|_{\dot{H}^{1}}
        \||\nabla_h|^{-\delta}\partial_3^2 v^3\|_{L^2}
       \| |\omega|^{\frac{r}{2}} \|^{\frac{2(r-1)}{r}}_{L_v^{\infty} L_h^{\frac{2}{1-\delta}\frac{2(r-1)}{r}}}\\
       &\lesssim \|v^3\|_{\dot{H}^{\frac12 + \frac{2}{p}}}
        \||\nabla_h|^{-\delta}\partial_3^2 v^3\|_{L^2}
       \| |\omega|^{\frac{r}{2}} \|^{\frac{3}{2}-\frac2r}_{L^2}
       \| \nabla |\omega|^{\frac r2} \|^{\frac{1}{2}}_{L^2}.
\end{align*}
Here for $\| |\omega|^{\frac{r}{2}} \|_{L_v^{\infty} L_h^{\frac{2}{1-\delta}\frac{2(r-1)}{r}}}$,
we have used interpolation inequalities to get
\begin{align*}
&\big\| \| |\omega|^{\frac{r}{2}} \|_{L_h^{\frac{2}{1-\delta}\frac{2(r-1)}{r}}}\big\|
_{L^{\infty}_v}
\lesssim \big\| \| |\nabla_h|^{\frac{2-r}{4(r-1)}} |\omega|^{\frac{r}{2}} \|_{L_v^{\infty}}\big\|_{L^{2}_h}\\
\lesssim
&\big\| \| |\nabla_h|^{\frac{2-r}{4(r-1)}} |\omega|^{\frac{r}{2}} \|_{L_v^{2}}
^{\frac{\frac12-\frac{2-r}{4(r-1)}}{1-\frac{2-r}{4(r-1)}}}
\| |\nabla_3|^{1-\frac{2-r}{4(r-1)}} |\nabla_h|^{\frac{2-r}{4(r-1)}} |\omega|^{\frac{r}{2}} \|_{L_v^{2}}
^{\frac{\frac12}{1-\frac{2-r}{4(r-1)}}}\big\|_{L^{2}_h}\\
\lesssim
&\| |\nabla_h|^{\frac{2-r}{4(r-1)}} |\omega|^{\frac{r}{2}} \|_{L^{2}}
^{\frac{\frac12-\frac{2-r}{4(r-1)}}{1-\frac{2-r}{4(r-1)}}}
\| \nabla |\omega|^{\frac{r}{2}} \|_{L^{2}}^{\frac{\frac12}{1-\frac{2-r}{4(r-1)}}}\\
\lesssim
&\| |\omega|^{\frac{r}{2}} \|_{L^{2}}^{\frac12-\frac{2-r}{4(r-1)}}
\| \nabla |\omega|^{\frac{r}{2}} \|_{L^{2}}^{\frac{2-r}{4(r-1)}\frac{\frac12-\frac{2-r}{4(r-1)}}{1-\frac{2-r}{4(r-1)}}+\frac{\frac12}{1-\frac{2-r}{4(r-1)}}}.
\end{align*}

Let us turn to the estimate of $I_{22}$. First, we consider the case $p\in(2,4)$ which can be easily dealt with
by anisotropic H\"{o}lder inequality and Sobolev embedding. More precisely,
\begin{align*}
  I_{22}:&=\int\partial_2v^3 \partial_2\Delta_h^{-1}\partial_3\omega\omega|\omega|^{r-2}dx\\
&\leq\|\partial_2v^3\|_{L_h^2 L_v^{\frac{p}{p-2}}}
      \||\nabla_h|^{-1}\partial_3 \omega\|_{L_h^{\frac{3}{\delta}}L_v^{r}}
      \|\omega|\omega|^{r-2}\|_{L_h^{\frac{r}{r-1}} L_v^{(\frac{2}{p}-\frac{1}{r})^{-1}}}\\
&\lesssim \|v^3\|_{\dot{H}^{\frac12 + \frac{2}{p}}}
        \|\partial_3 \omega\|_{L^r}
       \| |\omega|^{\frac{r}{2}} \|^{\frac{2(r-1)}{r}}_{L_h^{2}L_v^{\frac{2p(r-1)}{2r-p}}}\\
       &\lesssim \|v^3\|_{\dot{H}^{\frac12 + \frac{2}{p}}}
       \| |\omega|^{\frac{r}{2}} \|^{\frac{2}{p}}_{L^2}
       \| \nabla |\omega|^{\frac r2} \|^{2-\frac{2}{p}}_{L^2}.
\end{align*}

Next we consider $p=4$.
\begin{align*}
  I_{22}:&=\int\partial_2v^3 \partial_2\Delta_h^{-1}\partial_3\omega\omega|\omega|^{r-2}dx\\
&\leq \big\| \|\partial_2v^3\|_{L_h^2}\| |\nabla_h|^{-1}\partial_3\omega\|_{L_h^{\frac{3}{\delta}}}
\| \omega|\omega|^{r-2} \|_{L_h^{\frac{r}{r-1}}} \big\|_{L_v^1}\\
&\lesssim \big\| \|\partial_2v^3\|_{L_h^2}\| \partial_3\omega\|_{L_h^{r}}
\|  |\omega|^{\frac r2} \|_{L_h^{2}}^{\frac{2(r-1)}{r}} \big\|_{L_v^1}\\
&\lesssim \big\| \|\partial_2v^3\|_{L_h^2}\| \partial_3|\omega|^{\frac r2}\|_{L_h^{2}}
\|  |\omega|^{\frac r2} \|_{L_h^{2}} \big\|_{L_v^1}\\
&\lesssim \|v^3\|_{\dot{H}^{1}} \| \nabla |\omega|^{\frac r2} \|_{L^2}
\|  |\omega|^{\frac r2} \|_{L_h^{2}L_v^{\infty}}.
\end{align*}
Here we remark that in the third inequality above, we have used the fact that for smooth $\omega$,
the set $\{ x:\; \omega=0, \, \partial_3 \omega \ne 0\}$ has Lebesgue measure zero.
Therefore when bounding the term $\partial_3 \omega$ one can up to measure zero
regard it as $\partial_3 \omega \cdot 1_{\omega >0} + \partial_3 \omega \cdot 1_{\omega<0}$
and proceed to use interpolation inequalities involving $|\omega|^{\frac r 2}$ which has no
differentiability issues.

Now since
$$ \||\omega|^{\frac r2}\|_{L_h^{2}L_v^{\infty}}
  \lesssim\||\omega|^{\frac r2}\|^{\frac 12}_{L^2}\|\partial_3|\omega|^{\frac r2}\|^{\frac12}_{L^2},$$
one has
\begin{align*}
  I_{22}
\lesssim \|v^3\|_{\dot{H}^{1}} \| \nabla |\omega|^{\frac r2} \|_{L^2}^{\frac32}
\|  |\omega|^{\frac r2} \|_{L^2}^{\frac12}.
\end{align*}

Finally, we consider $p=2$. In this case, Sobolev embedding is not enough. We need to
apply Littlewood-Paley decomposition in the vertical direction and obtain
\begin{align}
  I_{22}:&=\int\partial_2v^3\partial_2\Delta_h^{-1}\partial_3\omega\omega|\omega|^{r-2}dx\notag\\
= &  \int\partial_2P^z_{[-J_0,J_0]}v^3|\nabla_h|^{-1}\partial_3\omega
     \omega|\omega|^{r-2}dx\label{bz2}\\
  & +\sum_{j>J_0}\int\partial_2P^z_{j}v^3|\nabla_h|^{-1}\partial_3\omega
     \omega|\omega|^{r-2}dx\label{bz3} \\
     & +\sum_{j<-J_0} \int\partial_2P^z_{j}v^3|\nabla_h|^{-1}\partial_3\omega
     \omega|\omega|^{r-2}dx,\label{bz1}
\end{align}
where $P^z_{j}$ denotes the Littlewood-Paley decomposition on the vertical variable, and $J_0$ is a positive number which will be determined later.

Estimate of \eqref{bz2}:
\begin{align*}
&\int\partial_2P^z_{[-J_0,J_0]}v^3|\nabla_h|^{-1}\partial_3\omega\omega|\omega|^{r-2}dx\\
\lesssim & \;\|\partial_2P^z_{[-J_0,J_0]}v^3\|_{L_h^{2}L_v^{\infty}}
      \||\nabla_h|^{-1}\partial_3\omega\|_{L_h^{(\frac1r-\frac12)^{-1}}L_v^{r}}
      \big\|\omega|\omega|^{r-2}\big\|_{L_h^{\frac{r}{r-1}}L_v^{\frac{r}{r-1}}}\\
\lesssim &\; \sqrt{J_0}\|v^3\|_{\dot{H}^{\frac{3}{2}}}
      \|\partial_3\omega\|_{L^{r}}
      \|\omega\|_{L^r}^{r-1}.
\end{align*}

Estimate of \eqref{bz3}:

For \eqref{bz3}, we observe that ($\epsilon_1>0$ is a sufficiently small constant)
\begin{align*}
\|\partial_2P^z_{j}v^3\|_{L_h^{2}L_v^{\infty}}
&\lesssim 2^{-j\epsilon_1}\|\partial_2 |\partial_3|^{\frac12+\epsilon_1}P^z_{j}v^3\|_{L^2}
\lesssim 2^{-j\epsilon_1} \| |\nabla|^{\frac 32 +\epsilon_1} v^3 \|_{L^2} \notag \\
& \lesssim 2^{-j\epsilon_1}\||\nabla|^{1-\delta} v^3\|_{L^2}^{\frac12-\delta-
         \epsilon_1}\||\nabla|^{2-\delta}v^3\|_{L^2}^{\frac12+\delta+\epsilon_1}
         \notag \\
&\lesssim 2^{-j\epsilon_1}\||\nabla_h|^{-\delta}\partial v^3\|_{L^2}^{\frac12-\delta-
         \epsilon_1}\||\nabla_h|^{-\delta}\partial^2 v^3\|_{L^2}^{\frac12+\delta+\epsilon_1}.
\end{align*}
Then
\begin{align*}
&\sum_{j>J_0}\int\partial_2P^z_{j}v^3|\nabla_h|^{-1}\partial_3\omega
     \omega|\omega|^{r-2}dx\\
\lesssim &\; 2^{-J_0\epsilon_1}\||\nabla_h|^{-\delta}\partial v^3\|_{L^2}^{\frac12-\delta-
        \epsilon_1}\||\nabla_h|^{-\delta}\partial^2 v^3\|_{L^2}^{\frac12+\delta+\epsilon_1}
        \|\nabla \omega\|_{L^{r}}
      \|\omega\|_{L^r}^{r-1}.
\end{align*}

Estimate of \eqref{bz1}: Note that
$$\| \partial_2 P^z_j v^3 \|_{L_h^2 L_v^{\infty} }
 \lesssim\; 2^{j \epsilon_1} \cdot \|\partial_2 |\partial_3 |^{\frac 12-\epsilon_1}
P^z_j v^3 \|_{L^2}, \quad \text{for negative $j$.}
$$
By an argument similar to \eqref{bz3},  we have
\begin{align*}
&\sum_{j<-J_0}\int\partial_2P^z_{j}v^3|\nabla_h|^{-1}\partial_3\omega
     \omega|\omega|^{r-2}dx\\
\lesssim &\; 2^{-J_0\epsilon_1}\||\nabla_h|^{-\delta}\partial v^3\|_{L^2}^{\frac12-\delta+
        \epsilon_1}\||\nabla_h|^{-\delta}\partial^2 u^3\|_{L^2}^{\frac12+\delta-\epsilon_1}
        \|\nabla \omega\|_{L^{r}}
      \|\omega\|_{L^r}^{r-1}.
\end{align*}

Choosing suitable $J_0$ then yields
\begin{align*}
&\int\partial_2v^3|\nabla_h|^{-1}\partial_3\omega\omega|\omega|^{r-2}dx\\
\lesssim &
      \sqrt{\log\big(10+\||\nabla_h|^{-\delta}\partial v^3\|_{L^2}
     +\|\omega\|_{L^r}\big)}\big(\|v^3\|_{\dot{H}^{\frac{3}{2}}}+1\big)
      \|\nabla \omega\|_{L^{r}}
      \|\omega\|_{L^r}^{r-1} \notag \\
 &\;     +(\||\nabla_h|^{-\delta}\partial^2 v^3\|_{L^2}^{\frac12+\delta+
       \epsilon_1}+\||\nabla_h|^{-\delta}\partial^2 v^3\|_{L^2}^{\frac12+\delta-
       \epsilon_1})\cdot
       \frac{1}{1+\|\omega\|^{100}_{L^r}}
       \| \nabla ( |\omega|^{\frac r2} ) \|_2
\end{align*}

\section{Estimate of $v^3$: case $2\le p\le 4$}
The equation of $v^3$ is
\begin{equation}\label{c1}
\partial_tv^3+(v\cdot\nabla)v^3-\Delta v^3=-\partial_3 P.
\end{equation}
Applying $\partial_k(k=1,2,3)$ to \eqref{c1}, one has
\begin{equation}\label{c2}
\partial_t\partial_kv^3+(v\cdot\nabla) \partial_kv^3+(\partial_kv\cdot\nabla) v^3-
\Delta \partial_k v^3=-\partial_3 \partial_kP.
\end{equation}
Taking the $L^2$ inner product of equation \eqref{c2} with $|\nabla_h|^{-2\delta}\partial_k
v^3$, one has
\begin{align}\nonumber
\frac12 \frac{d}{dt}\bigg(\sum_{k=1}^3\||\nabla_h|^{-\delta}\partial_k v^3\|^2_{L^2}\bigg)
&+\sum_{k=1}^3\||\nabla_h|^{-\delta}\partial_k \nabla v^3\|^2_{L^2}\\
=\sum_{k=1}^3 \bigg(
&-\int (\partial_kv\cdot\nabla) v^3 \cdot |\nabla_h|^{-2\delta}\partial_k v^3 dx\label{c3}\\
&-\int (v\cdot\nabla) \partial_kv^3 \cdot |\nabla_h|^{-2\delta}\partial_k v^3dx \label{c4}\\
&-\int \partial_3 \partial_k P \cdot |\nabla_h|^{-2\delta}\partial_k v^3dx\bigg)
 \label{c5}.
\end{align}
\subsection{Estimate of \eqref{c3}}
Case 1: $\int \partial_kv^3 \partial_3v^3 \cdot |\nabla_h|^{-2\delta}\partial_k v^3 dx$.
We have for all $2\le p<\infty$:
\begin{align*}
 &\int \partial_kv^3 \partial_3v^3 \cdot |\nabla_h|^{-2\delta}\partial_k v^3 dx\notag \\
 & \lesssim \| \partial v^3 \|_{L_v^{(\frac 14 +\frac {\delta} 2+ \epsilon)^{-1}} L_h^{\frac 2{1-\epsilon} }}^2
\cdot \| |\nabla_h|^{-2\delta} \partial v^3 \|_{L_v^{(\frac 12-\delta-2\epsilon)^{-1}} L_h^{\frac 1 {\epsilon} }} \notag \\
& \lesssim  \|
|\nabla|^{\frac 54 -\frac {\delta} 2} v^3
 \|_2^2 \| |\nabla_h|^{-\delta} \partial^2 v^3 \|_2 \notag \\
& \lesssim
  \| |\nabla|^{\frac 2p+\frac 12 } v^3 \|_2 \cdot \| |\nabla|^{-\delta+2-\frac 2p} v^3 \|_2
  \cdot \| |\nabla_h|^{-\delta} \partial^2 v^3 \|_2
  \notag \\
& \lesssim \| v^3 \|_{\dot H^{\frac 12+\frac 2p}} \cdot \| |\nabla_h|^{-\delta} \partial^2 v^3\|_2^{2-\frac 2p}
\cdot \| |\nabla_h|^{-\delta} \partial v^3 \|_2^{\frac 2p}.
\end{align*}

Case 2: $\int(\partial_kv^h \cdot \nabla_h)v^3 \cdot |\nabla_h|^{-2\delta}\partial_k
v^3 dx$.

Case 2a: $\int(\partial_k|\nabla_h|^{-1}\omega \cdot \nabla_h)v^3 \cdot
|\nabla_h|^{-2\delta}\partial_k v^3 dx$.

Applying Littlewood-Paley decomposition on the horizontal direction (see Lemma
\ref{lem_trilinear}, here $\widetilde P^h_j$ corresponds to projection
in $x_h$-variable only), one has
\begin{align}
 &\int(\partial_k|\nabla_h|^{-1}\omega \cdot \nabla_h)v^3 \cdot |\nabla_h|^{-2\delta}
  \partial_k v^3 dx\notag\\
=&\sum_j\bigg[\int(\partial_k |\nabla_h|^{-1}\widetilde{P}_j^h\omega \cdot
  \nabla_h)\widetilde{P}_{\ll j}^hv^3 \cdot |\nabla_h|^{-2\delta}\partial_k\widetilde{P}
   _j^hv^3dx\label{c3_case2a_2}\\
 &+\int(\partial_k |\nabla_h|^{-1}\widetilde{P}_j^h\omega \cdot
  \nabla_h)\widetilde{P}_{j}^hv^3 \cdot |\nabla_h|^{-2\delta}\partial_k\widetilde{P}
   _{\ll j}^hv^3dx\label{c3_case2a_3}\\
 &+\int(\partial_k |\nabla_h|^{-1}\widetilde{P}_{\ll j}^h\omega \cdot
  \nabla_h)\widetilde{P}_{j}^hv^3 \cdot |\nabla_h|^{-2\delta}\partial_k\widetilde{P} _j^hv^3 dx\label{c3_case2a_4}\\
 &+\int(\partial_k |\nabla_h|^{-1}\widetilde{P}_j^h\omega \cdot
  \nabla_h)\widetilde{P}_j^hv^3 \cdot |\nabla_h|^{-2\delta}\partial_k\widetilde{P}
   _j^hv^3dx\bigg].\label{c3_case2a_1}
\end{align}
Estimate of \eqref{c3_case2a_2}:
 For $2\le p<\infty$, by taking $r$ sufficiently close to $2$,  we have
 \begin{align*}
  &  \sum_j \int(\partial_k |\nabla_h|^{-1}\widetilde{P}_j^h\omega \cdot \nabla_h)
     \widetilde{P}_{\ll j}^h v^3 \cdot |\nabla_h|^{-2\delta}\partial_k\widetilde{P}_j^h
     v^3dx\\
 \lesssim & \| (2^j \partial_k |\nabla_h|^{-1} \widetilde P_j^h \omega)
 \|_{l_j^2 L_h^r L_v^r}
 \| (2^{-j} \nabla_h \widetilde P^h_{\ll j } v^3 )
 \|_{l_j^{\infty} L_h^{(\frac 18+\frac{\delta} 4 - \frac {\epsilon} 2)^{-1} }
 L_v^{\frac 1 {\epsilon}}} \notag \\
 & \cdot\|  (|\nabla_h|^{-2\delta}
  \widetilde P_j^h  \partial_k v^3) \|_{l_j^2 L_h^{(\frac 38 -\frac {7\delta} {12}
 +\frac {\epsilon} 2 )^{-1} } L_v^{(1-\frac 1 r - \epsilon)^{-1} } }  \notag \\
 \lesssim & \|\nabla \omega\|_r \| |\nabla|^{\frac 54 -\frac {\delta} 2} v^3
 \|_2^2 \notag \\
 \lesssim & \|\nabla \omega\|_r
 \cdot \| |\nabla|^{\frac 2p+\frac 12 } v^3 \|_2 \cdot \| |\nabla|^{-\delta+2-\frac 2p} v^3 \|_2 \notag \\
 \lesssim & \|\nabla \omega\|_r
 \cdot \|  v^3 \|_{\dot H^{\frac 12 +\frac 2p}}
 \cdot \| |\nabla_h|^{-\delta} \partial v^3 \|_2^{\frac 2p}
 \cdot  \| |\nabla_h|^{-\delta} \partial^2 v^3 \|_2^{1-\frac 2p}.
  \end{align*}

Estimate of \eqref{c3_case2a_3}:
the estimate is similar to the above (one only need to swap $l_j^{\infty}$
and $l_j^2$ in second and third) and therefore omitted.

Estimate of \eqref{c3_case2a_4}:
Clearly  for $2\le p<\infty$,
\begin{align*}
  &  \sum_j\int(\partial_k |\nabla_h|^{-1}\widetilde{P}_{\ll j}^h\omega \cdot
  \nabla_h)\widetilde{P}_{j}^hv^3 \cdot |\nabla_h|^{-2\delta}\partial_k
  \widetilde{P} _j^hv^3 dx \\
 \lesssim & \| (\partial_k |\nabla_h|^{-1}\widetilde{P}_{\ll j}^h\omega )\|
   _{ l_j^{\infty} L_h^{(\frac1r-\frac12)^{-1}} L_v^r}
   \|  (2^{-j(\frac 76 \delta -\epsilon+\frac 14)}\nabla_h \widetilde{P}_{j}^hv^3
   )\|_{ l_j^2  L_h^{(1-\frac1r)^{-1}}L_v^{\frac{1}{\epsilon}}} \\
  &   \cdot \| (2^{j(\frac 76 \delta -\epsilon+\frac 14)} |\nabla_h|^{-2\delta}\widetilde{P}_j^h \partial_k v^3 )\|_{l_j^2 L_h^{2} L_v^{(1-\frac{1}{r}-\epsilon)^{-1}}} \\
\lesssim & \|\nabla \omega\|_r \| |\nabla|^{\frac 14 -\frac {\delta} 2} \partial v^3
 \|_2^2 \notag \\
 \lesssim & \|\nabla \omega\|_r
 \cdot \|  v^3 \|_{\dot H^{\frac 12 +\frac 2p}}
 \cdot \| |\nabla_h|^{-\delta} \partial v^3 \|_2^{\frac 2p}
 \cdot  \| |\nabla_h|^{-\delta} \partial^2 v^3 \|_2^{1-\frac 2p}.
 \end{align*}

Estimate of \eqref{c3_case2a_1}: this is similar to the above and it is omitted.

Case 2b: $\int(\partial_k|\nabla_h|^{-1}\partial_3v^3 \cdot \nabla_h)v^3
\cdot |\nabla_h|^{-2\delta}\partial_k v^3dx$.
\begin{align}
 &\int(\partial_k|\nabla_h|^{-1}\partial_3v^3 \cdot \nabla_h)v^3 \cdot
   |\nabla_h|^{-2\delta} \partial_k v^3 dx\notag\\
=&\sum_j\bigg[\int(\partial_k |\nabla_h|^{-1}\widetilde{P}_j^h\partial_3v^3 \cdot
  \nabla_h)\widetilde{P}_{\ll j}^hv^3 \cdot |\nabla_h|^{-2\delta}\partial_k\widetilde{P}
   _j^hv^3dx\label{c3_case2b_2}\\
 &+\int(\partial_k |\nabla_h|^{-1}\widetilde{P}_j^h\partial_3v^3 \cdot
  \nabla_h)\widetilde{P}_{j}^hv^3 \cdot |\nabla_h|^{-2\delta}\partial_k\widetilde{P}
   _{\ll j}^hv^3dx\label{c3_case2b_3}\\
 &+\int(\partial_k |\nabla_h|^{-1}\widetilde{P}_{\ll j}^h\partial_3v^3 \cdot
  \nabla_h)\widetilde{P}_{j}^hv^3 \cdot
  |\nabla_h|^{-2\delta}\partial_k\widetilde{P}_j^hv^3 dx\label{c3_case2b_4}\\
 &+\int(\partial_k |\nabla_h|^{-1}\widetilde{P}_j^h\partial_3v^3 \cdot
  \nabla_h)\widetilde{P}_j^hv^3 \cdot |\nabla_h|^{-2\delta}\partial_k\widetilde{P}
   _j^hv^3dx\bigg].\label{c3_case2b_1}
\end{align}
Estimate of \eqref{c3_case2b_2}:

We have for all $2\le p<\infty$:
\begin{align*}
  & \sum_j\int(\partial_k |\nabla_h|^{-1}\widetilde{P}_j^h\partial_3v^3 \cdot \nabla_h)
     \widetilde{P}_{\ll j}^hv^3 \cdot |\nabla_h|^{-2\delta}\partial_k\widetilde{P}_j^hv^3
     dx\\
 \lesssim &\|  ( 2^{j(1-\delta)} \partial_k |\nabla_h|^{-1}\widetilde{P}_j^h\partial_3v^3 )\|
     _{ l_j^2 L_h^2 L_v^2}
     \| (2^{-j} \nabla_h \widetilde{P}_{\ll j}^hv^3 )\|
     _{ l_j^{\infty} L_h^{(\frac 18+\frac{\delta} 4-\frac{\epsilon} 2)^{-1}
     } L_v^{\frac 1 {\epsilon} }} \\
  &    \| (2^{j\delta} |\nabla_h|^{-2\delta}\partial_k\widetilde{P}_j^hv^3)\|
     _{l_j^2 L_h^{(\frac 3 8 -\frac{\delta}4 +\frac{\epsilon} 2 )^{-1} } L_v^{\frac{2}{1-2\epsilon}}}
     \\
 \lesssim & \||\nabla_h|^{-\delta}\partial^2 v^3\|_{L^2}
 \cdot \| |\nabla|^{\frac 14 -\frac{\delta} 2} \partial v^3 \|_2^2 \notag \\
 \lesssim & \||\nabla_h|^{-\delta}\partial^2 v^3\|_{L^2} \||\nabla|^{-\delta}
 |\nabla|^{1-\frac 2p}
 \partial v^3\|_{L^2}
    \|v^3\|_{\dot{H}^{\frac12+\frac 2p}} \notag \\
    \lesssim & \||\nabla_h|^{-\delta}\partial^2 v^3\|_{L^2}^{2-\frac 2p}
    \| |\nabla_h|^{-\delta} \partial v^3 \|_2^{\frac 2p}
    \| v^3 \|_{\dot H^{\frac 12+\frac 2p}}.
\end{align*}
Estimate of \eqref{c3_case2b_3}:
for all $2\le p<\infty$,
\begin{align*}
  & \sum_j\int(\partial_k |\nabla_h|^{-1}\widetilde{P}_j^h\partial_3v^3 \cdot \nabla_h)
     \widetilde{P}_{j}^hv^3 \cdot |\nabla_h|^{-2\delta}\partial_k\widetilde{P}_{\ll j}^hv^3
     dx\\
  \lesssim &\|  ( 2^{j(1-\delta)} \partial_k |\nabla_h|^{-1}\widetilde{P}_j^h\partial_3v^3 )\|
     _{ l_j^2 L_h^2 L_v^2}
    \| ( 2^{j(-\frac 14-\frac 12 \delta+\epsilon)} \nabla_h \tilde P^h_j v^3)
    \|_{l_j^2 L_h^2 L_v^{\frac 1 {\epsilon}} }
    \notag \\
    & \quad \cdot \| (2^{j(-\frac 34 +\frac 32 \delta -\epsilon)}
    |\nabla_h|^{-2\delta} \widetilde P_{\ll j}^h \partial_k v^3 )
    \|_{l_j^{\infty} L_h^{\infty} L_v^{\frac 2 {1-2\epsilon} } } \notag \\
 \lesssim & \||\nabla_h|^{-\delta}\partial^2 v^3\|_{L^2}
 \cdot \| |\nabla|^{\frac 14 -\frac{\delta} 2} \partial v^3 \|_2^2 \notag \\
    \lesssim & \||\nabla_h|^{-\delta}\partial^2 v^3\|_{L^2}^{2-\frac 2p}
    \| |\nabla_h|^{-\delta} \partial v^3 \|_2^{\frac 2p}
    \| v^3 \|_{\dot H^{\frac 12+\frac 2p}}.
\end{align*}

 Estimate of \eqref{c3_case2b_4}: for $2\le p<\infty$, we have
\begin{align*}
  & \sum_j\int(\partial_k |\nabla_h|^{-1}\widetilde{P}_{\ll j}^h\partial_3v^3 \cdot
    \nabla_h)\widetilde{P}_{j}^hv^3 \cdot
    |\nabla_h|^{-2\delta}\partial_k\widetilde{P}_{j}^hv^3dx\\
 \lesssim & \| (\partial_k |\nabla_h|^{-1}\widetilde{P}_{\ll j}^h
   \partial_3v^3 )\|_{ l_j^{\infty} L_h^{\frac{2}{\delta}} L_v^2}
   \cdot \| (2^{j(-\frac 14 -\delta+\epsilon)} \nabla_h \widetilde P_j^h v^3 )
   \|_{l_j^2 L_h^{\frac 4{2-\delta}} L_v^{\frac 1 {\epsilon}}} \notag \\
   & \quad \cdot \| (2^{j(\frac 14 +\delta-\epsilon)}
   |\nabla_h|^{-2\delta} \widetilde P^h_j \partial_k v^3 ) \|_{l_j^2 L_h^{\frac 4{2-\delta}}
   L_v^{\frac 2 {1-2\epsilon}}} \notag \\
   \lesssim & \||\nabla_h|^{-\delta}\partial^2 v^3\|_{L^2}
 \cdot \| |\nabla|^{\frac 14 -\frac{\delta} 2} \partial v^3 \|_2^2 \notag \\
    \lesssim & \||\nabla_h|^{-\delta}\partial^2 v^3\|_{L^2}^{2-\frac 2p}
    \| |\nabla_h|^{-\delta} \partial v^3 \|_2^{\frac 2p}
    \| v^3 \|_{\dot H^{\frac 12+\frac 2p}}.
\end{align*}

Estimate of \eqref{c3_case2b_1}:

 The estimate of this term is similar to the above, thus we omit the details.

\subsection{Estimate of \eqref{c4}}
By using integration by parts, one has
\begin{equation*}
  \int(v\cdot\nabla) \partial_kv^3 \cdot |\nabla_h|^{-2\delta}\partial_k v^3dx
   = -\int v \partial_kv^3 \cdot \nabla |\nabla_h|^{-2\delta}\partial_k v^3dx.
\end{equation*}
Case 1: $\int v^3 \partial_kv^3 \partial_3 |\nabla_h|^{-2\delta}\partial_k v^3dx$.
For all $2\le p<\infty$, we have
\begin{align*}
     &\int v^3 \partial_kv^3 \partial_3 |\nabla_h|^{-2\delta}\partial_k v^3dx\\
\lesssim & \|v^3\|_{L_h^{\frac 2{\delta}} L_v^{(\frac 14-\frac{\delta} 2)^{-1}}}
\cdot \| \partial v^3 \|_{L_h^2 L_v^{(\frac 14+\frac{\delta} 2)^{-1}} }
\cdot \| |\nabla_h|^{-2\delta} \partial^2 v^3 \|_{L_h^{\frac 2{1-\delta}} L_v^2} \notag \\
\lesssim & \| |\nabla|^{\frac 54-\frac{\delta} 2} v^3\|_2^2
\cdot \| |\nabla_h|^{-\delta} \partial^2 v^3 \|_2 \notag \\
 \lesssim & \| |\nabla_h|^{-\delta} \partial v^3 \|_2^{\frac 2p}
\cdot \| v^3 \|_{\dot H^{\frac 12+\frac 2p}} \cdot \| |\nabla_h|^{-\delta} \partial^2 v^3
\|_2^{2-\frac 2p}
\end{align*}

Case 2: $\int v^h \partial_kv^3 \cdot \nabla_h |\nabla_h|^{-2\delta}\partial_k v^3dx$.

Case 2a: $\int |\nabla_h|^{-1}\omega \partial_kv^3 \cdot \nabla_h |\nabla_h|^{-2\delta}\partial_k
v^3dx$.
If $2<p\le 4$, then
\begin{align*}
    & \int|\nabla_h|^{-1}\omega \partial_kv^3 \cdot \nabla_h |\nabla_h|^{-2\delta}\partial_k v^3
    dx\\
\lesssim & \| |\nabla_h|^{-1} \omega\|_{L_h^{\frac 3{\delta}} L_v^{(\frac {\delta} 3 +\frac 2p-\frac 12)^{-1}}}
\cdot \| \partial_k v^3 \|_{L_h^2 L_v^{(1-\frac 2 p)^{-1}}}
 \cdot \| |\nabla_h|^{-2\delta} \nabla_h \partial v^3 \|_{(\frac 12-\frac {\delta} 3)^{-1}} \notag \\
\lesssim & \| |\nabla|^{1-\frac 2p } \omega \|_r
\cdot \| v^3 \|_{\dot H^{\frac 12 +\frac 2p}} \cdot \| |\nabla_h|^{-\delta} \partial^2 v^3 \|_2 \notag \\
\lesssim & \| \omega \|_r^{\frac 2p} \| \nabla \omega\|_r^{1-\frac 2p}
\cdot  \| v^3 \|_{\dot H^{\frac 12 +\frac 2p} } \cdot \| |\nabla_h|^{-\delta} \partial^2 v^3 \|_2.
\end{align*}
On the other hand if $p=2$, then
\begin{align*}
    & \int|\nabla_h|^{-1}\omega \partial_kv^3 \cdot \nabla_h |\nabla_h|^{-2\delta}\partial_k v^3
    dx\\
\leq &\||\nabla_h|^{-1}\omega\|_{L_h^{\frac{3}{\delta}}L_v^{r}}
      \|\partial_kv^3\|_{L_h^{\frac{2}{1-\epsilon}}L_v^{\frac{1}{\epsilon}}}
      \|\nabla_h|\nabla_h|^{-2\delta}\partial_k v^3\|_{L_h^{(\frac{1+\epsilon}{2}-
      \frac{\delta}{3})^{-1}}L_v^{(1-\epsilon-\frac1r)^{-1}}}\\
\lesssim & \|\omega\|_{L^r}
        \|v^3\|_{\dot{H}^{\frac32}}
        \||\nabla_h|^{-\delta}\partial^2 v^3\|_{L^2}.
\end{align*}

Case 2b: $\int |\nabla_h|^{-1}\partial_3v^3 \partial_kv^3 \cdot \nabla_h |\nabla_h|^{-2\delta}
\partial_k v^3$. If $p=2$, then
\begin{align*}
     & \int|\nabla_h|^{-1}\partial_3v^3 \partial_kv^3 \cdot \nabla_h |\nabla_h|^{-2\delta}
      \partial_k v^3dx\\
\leq &\||\nabla_h|^{-1}\partial_3v^3\|_{L_h^{\frac{2}{\delta}}L_v^{2}}
      \|\partial_kv^3\|_{L_h^{\frac{2}{1-\epsilon}}L_v^{\frac{1}{\epsilon}}}
      \|\nabla_h|\nabla_h|^{-2\delta}\partial_k v^3\|_{L_h^{\frac{2}{1+\epsilon-\delta}}
      L_v^{\frac{2}{1-2\epsilon}}}\\
\lesssim & \||\nabla_h|^{-\delta}\partial v^3\|_{L^2}
        \|v^3\|_{\dot{H}^{\frac32}}
        \||\nabla_h|^{-\delta}\partial^2 v^3\|_{L^2}.
\end{align*}
If $2<p\le 4$, then
\begin{align*}
     & \int|\nabla_h|^{-1}\partial_3v^3 \partial_kv^3 \cdot \nabla_h |\nabla_h|^{-2\delta}
      \partial_k v^3dx\\
 \lesssim & \| |\nabla_h |^{-1} \partial_3 v^3 \|_{L_h^{\frac 2{\delta-\epsilon}}
 L_v^{(\epsilon+\frac 2p -\frac 12)^{-1}} }
   \cdot \| \partial v^3 \|_{L_h^2 L_v^{(1-\frac 2p)^{-1}}}
   \cdot \| \nabla_h |\nabla_h|^{-2\delta} \partial v^3 \|_{L_h^{\frac 2 {1+\epsilon-\delta}}
   L_v^{\frac 2 {1-2\epsilon}}} \notag \\
   \lesssim & \| |\nabla_h|^{-\delta} |\nabla|^{1-\frac 2p} \partial v^3 \|_2
   \cdot \| v^3 \|_{\dot H^{\frac 12 +\frac 2p}}
   \cdot \| |\nabla_h|^{-\delta} \partial^2 v^3 \|_2 \notag \\
   \lesssim & \| |\nabla_h|^{-\delta} \partial v^3 \|_2^{\frac 2p}
   \cdot \| v^3 \|_{\dot H^{\frac 12+\frac 2p}} \cdot
   \| |\nabla_h|^{-\delta} \partial^2 v^3 \|_2^{2-\frac 2p}.
   \end{align*}

\subsection{Estimate of \eqref{c5}}
$$\int \partial_3 \partial_k P \cdot |\nabla_h|^{-2\delta}\partial_k v^3dx
=-\int\big(\sum_{l,m=1}^3\partial_lu^m\partial_mu^l
       \big)\cdot \partial_3 \partial_k \Delta^{-1}(|\nabla_h|^{-2\delta}\partial_k v^3)dx.$$
Case 1: $l,m\in\{1,2\}$. First observe that if $p=2$, then (below $\mathcal R:=
\partial_3 \partial_k \Delta^{-1}$)
\begin{align*}
  & \int \partial_lv^m\partial_mv^l
    \cdot \partial_3 \partial_k \Delta^{-1}(|\nabla_h|^{-2\delta}\partial_k v^3)dx\\
\lesssim & \|\partial_lv^m\|_{L_h^{(\frac14+\frac{\epsilon}{4}+\frac{\delta}{2})^{-1}}
        L_v^{\frac{2}{1-\epsilon}}}
        \|\partial_mv^l\|_{L_h^{(\frac14+\frac{\epsilon}{4}+\frac{\delta}{2})^{-1}}
        L_v^{\frac{2}{1-\epsilon}}}
        \||\nabla_h|^{-2\delta}\partial_k  \mathcal R v^3\|_{L_h^{(\frac12-\frac{\epsilon}{2}-
        \delta)^{-1}} L_v^{\frac{1}{\epsilon}}} \\
\lesssim & \big(\|\omega\|^2_{L_h^{(\frac14+\frac{\epsilon}{4}+\frac{\delta}{2})^{-1}}
        L_v^{\frac{2}{1-\epsilon}}}
        +\|\partial_3v^3\|^2_{L_h^{(\frac14+\frac{\epsilon}{4}+\frac{\delta}{2})^{-1}}
        L_v^{\frac{2}{1-\epsilon}}}\big)
        \|v^3\|_{\dot{H}^{\frac32}}\\
\lesssim & \big(\| |\nabla|^{\frac12}\omega\|^2_{L^r}
       +\| |\nabla|^{\frac12}|\nabla_h|^{-\delta}\partial_3v^3\|^2_{L^2}\big)
        \|v^3\|_{\dot{H}^{\frac32}}\\
\lesssim & \|\omega\|_{L^r} \|\nabla \omega\|_{L^r}\|v^3\|_{\dot{H}^{\frac32}}
       +\||\nabla_h|^{-\delta}\partial v^3\|_{L^2}\||\nabla_h|^{-\delta}\partial^2v^3\|_{L^2}
        \|v^3\|_{\dot{H}^{\frac32}}.
\end{align*}
On the other hand if $2<p\le 4$, then
\begin{align*}
  & \int \partial_lv^m\partial_mv^l
    \cdot \partial_3 \partial_k \Delta^{-1}(|\nabla_h|^{-2\delta}\partial_k v^3)dx\\
\lesssim & ( \| \omega\|^2_{ L_h^{\frac 4 {1+2\delta}} L_v^p}+
\| \partial_3 v^3\|^2_{ L_h^{\frac 4 {1+2\delta}} L_v^p}
) \| |\nabla_h|^{-2\delta} \partial \mathcal R v^3 \|_{L_h^{(\frac 12-\delta)^{-1}} L_v^{(1-\frac 2p)^{-1}} } \notag \\
\lesssim &  \| \omega \|_r^{\frac 2p} \| \nabla \omega \|_r^{2-\frac 2p} \| v^3 \|_{\dot H^{\frac 12+\frac 2p}} +
\| |\nabla_h|^{-\delta} \partial v^3 \|_2^{\frac 2p}
\| \nabla_h |^{-\delta} \partial^2 v^3 \|_2^{2-\frac 2p}
\| v^3 \|_{\dot H^{\frac 12+\frac 2p}}.
\end{align*}

Case 2: $l=3$ or $m=3$.

The estimate of this term is similar to \eqref{c3} and therefore omitted.

\section{The case $4<p<\infty$}
We shall adopt the same notation as in previous sections. In the following estimates,
we need to use the homogeneous horizontal Besov norm $\|\cdot \|_{\dot B^{h,s}_{p,q}}$ defined for a
three-variable function $f=f(x_h,x_3)=f(x_1,x_2,x_3)$ as:
\begin{align*}
\| f (\cdot, x_3) \|_{\dot B^{h,s}_{p,q}}
:=  \| (2^{js} \| P^h_j f(\cdot, x_3) \|_{L^p_{x_h}})\|_{l_j^q},
\end{align*}
where $s\in \mathbb R$, $1\le p,q\le \infty$, and $P^h_j$ is the Littlewood-Paley projection
operator in the $x_h$ variable.

\subsection{Estimate of $\omega$}

Estimate of $I_1$:   Denote $ g = |\omega|^{\frac r2}$. Then
\begin{align*}
|I_1  | & \lesssim \| |\nabla|^{-\frac 12 +\frac 2p} \partial_3 v^3 \|_2
\cdot \| |\nabla|^{\frac 12 - \frac 2p} ( g^2 ) \|_2 \notag \\
& \lesssim \| v^3 \|_{\dot H^{\frac 12 +\frac 2p}}
\cdot \| |\nabla|^{\frac 12-\frac 2p} g\|_{(\frac 12 - \frac{ \frac 12+\frac 2p} 3)^{-1}}
\cdot \| g \|_{( \frac { \frac 12+\frac 2p} {3 } )^{-1}} \notag \\
& \lesssim \| v^3 \|_{\dot H^{\frac 12+\frac 2p}} \cdot\| |\nabla | g \|_2
\cdot  \| |\nabla|^{1-\frac 2p } g \|_2 \notag \\
& \lesssim \| v^3 \|_{\dot H^{\frac 12 +\frac 2p}}
\cdot \| |\omega|^{\frac r2} \|_2^{\frac 2p} \| \nabla( |\omega|^{\frac r2}) \|_2^{2-\frac 2p}.
\end{align*}

Estimate of $I_{21}$. We have
\begin{align*}
|I_{21}|
& \lesssim
\Bigl\|  \| |\nabla_h|^{-\frac{\delta}3 +\frac 2p-\frac 12 }\partial_2 v^3 \|_{L_h^{(\frac 1r-\frac {\delta}2 +\frac{\epsilon}2 )^{-1}}}
\cdot \bigl( \| |\nabla_h|^{-1} \partial_3^2 v^3 \|_{L_h^{\frac 2 {\delta}}}
\cdot \| |\nabla_h|^{\frac {\delta}3 +\frac 12 -\frac 2 p}  ( \omega |\omega|^{r-2} ) \|_{L_h^{(1-\frac 1r -\frac{\epsilon} 2)^{-1}} }
\notag \\
& \quad + \| |\nabla_h|^{\frac {\delta}3 - \frac 12 -\frac 2p} \partial_3^2 v^3 \|_{L_h^{(\frac 14+\frac 23\delta-\frac 1p)^{-1}}}
\cdot \| |\omega|^{r-1} \|_{L_h^{ (\frac 14 -\frac 12 \delta+\frac 1p -\frac {\epsilon}2 )^{-1} } }  \bigr)
\Bigr\|_{L_v^1} \notag \\
& \lesssim \Bigl\|  \| |\nabla_h|^{\frac 12 +\frac 2p-\epsilon} v^3 \|_{L_h^2}
\cdot \| |\nabla_h|^{-\delta} \partial_3^2 v^3 \|_{L_h^2}
\cdot ( \| \omega\|^{r-1}_{\dot B^{h, \frac{\frac {\delta}3 +\frac 12 -\frac 2p+\epsilon}{r-1} }_{r,2(r-1)}}
+ \| |\nabla_h|^{\frac{\frac {\delta} 3 +\frac 12 -\frac 2p+\epsilon} {r-1} } \omega \|_{L_h^r}^{r-1} )
\Bigr\|_{L_v^1} \notag \\
& \lesssim \| |\nabla_h|^{\frac 12 +\frac 2p-\epsilon} v^3 \|_{L_v^{(\frac 12-\epsilon)^{-1}} L_h^2}\cdot
\| |\nabla_h|^{-\delta} \partial_3^2 v^3 \|_{L_v^2 L_h^2}
\cdot \| \omega\|^{r-1}_{L_v^{\frac{r-1}{\epsilon}}
 \dot B^{h, \frac{\frac {\delta}3 +\frac 12 -\frac 2p+\epsilon}{r-1} }_{r,2(r-1)} }  \notag \\
 & \lesssim  \| v^3 \|_{\dot H^{\frac 12+\frac 2p}} \cdot \| |\nabla_h|^{-\delta} \partial_3^2 v^3 \|_2
 \cdot  \| |\omega|^{\frac{r}{2}} \|^{1-\frac2r+\frac{2}{p}}_{L^2}
       \| \nabla |\omega|^{\frac r2} \|^{1-\frac{2}{p}}_{L^2}.
\end{align*}

Estimate of $I_{22}$.

\begin{align*}
|I_{22}|
& \lesssim \Bigl\|
    \| |\nabla_h|^{-1+\frac 2p+\epsilon } \partial_2 v^3 \|_{L_h^2}
    \cdot \bigl(
        \| |\nabla_h|^{-1} \partial_3 \omega\|_{L_h^{\frac 3 {\delta}}}
             \cdot \| |\nabla_h|^{1 -\frac 2p-\epsilon} ( \omega |\omega|^{r-2} ) \|_{L_h^{(\frac 12 -\frac {\delta} 3 )^{-1}} }
              \notag \\
& \quad        +     \| |\nabla_h|^{ -\frac 2 p-\epsilon} \partial_3 \omega \|_{L_h^{(\frac 1r  - \frac 1p-\frac{\epsilon}2)^{-1} } }
               \cdot \| |\omega|^{r-1} \|_{L_h^{(\frac 12+\frac 1p-\frac 1r+\frac{\epsilon}2)^{-1} } } \bigr)
               \Bigr\|_{L_v^1} \notag \\
 & \lesssim \Bigl\|
  \| |\nabla_h|^{\frac 2p +\epsilon } v^3 \|_{L_h^2}
  \cdot   \| \partial_3 \omega \|_{L_h^r}
  \cdot \| \omega \|^{r-1}_{\dot B^{h,  \frac{ 1-\frac 2p-\epsilon} {r-1}}_{ r, 2(r-1)}}
  \Bigr\|_{L_v^1} \notag \\
  & \lesssim
   \| |\nabla_h|^{\frac 2p +\epsilon } v^3 \|_{L_v^{\frac 1 {\epsilon}} L_h^2}
  \cdot \| \nabla \omega\|_{L_v^r L_h^r} \cdot \| \omega \|^{r-1}_{L_v^{(1-\frac 1r-\epsilon)^{-1}(r-1)}
  \dot B^{h,  \frac{ 1-\frac 2p-\epsilon} {r-1}}_{ r, 2(r-1)}}
   \notag \\
  & \lesssim \| v^3 \|_{\dot H^{\frac 12+\frac 2p}} \cdot \| \nabla ( |\omega|^{\frac r2} ) \|_{L^2}^{2 -\frac 2p}
  \cdot \| |\omega|^{\frac r2} \|_{L^2}^{\frac 2p}.
 \end{align*}

\subsection{Estimate of $v^3$}

\subsubsection{Estimate of \eqref{c3}}
This is already done for $2\le p<\infty$ in the previous sections.

\subsubsection{Estimate of \eqref{c4}}
Recall by using integration by parts, one has
\begin{equation*}
  \int(v\cdot\nabla) \partial_kv^3 \cdot |\nabla_h|^{-2\delta}\partial_k v^3dx
   = -\int v \partial_kv^3 \cdot \nabla |\nabla_h|^{-2\delta}\partial_k v^3dx.
\end{equation*}

Case 1: $\int v^3 \partial_kv^3 \partial_3 |\nabla_h|^{-2\delta}\partial_k v^3dx$. This is already
done for $2\le p<\infty$.

Case 2: $\int v^h \partial_kv^3 \cdot \nabla_h |\nabla_h|^{-2\delta}\partial_k v^3dx$.

Case 2a: $\int |\nabla_h|^{-1}\omega \partial_kv^3 \cdot \nabla_h |\nabla_h|^{-2\delta}\partial_k
v^3dx$.

Denote $T= \nabla_h |\nabla_h|^{-2\delta}$. Then (in the following
computation we used a commutator estimate which is proved in
\cite{Li16} for more general operators)
\begin{align*}
 & 2 \int |\nabla_h|^{-1}\omega \partial_kv^3 \cdot \nabla_h |\nabla_h|^{-2\delta}\partial_k
v^3 dx\notag \\
= & -\int (T( |\nabla_h|^{-1} \omega \cdot \partial_k v^3) - |\nabla_h|^{-1} \omega T  \partial_k
v^3 ) \partial_k v^3 dx\notag \\
\lesssim & \Bigl\|  \| (T( |\nabla_h|^{-1} \omega \cdot \partial_k v^3) - |\nabla_h|^{-1} \omega T  \partial_k
v^3 )\|_{L_h^{(\frac 58 +\frac 1{2p} -\frac{\delta}4 -\frac{\epsilon} 4)^{-1}}}
\cdot \| \partial_k v^3 \|_{L_h^{(\frac 38 -\frac 1{2p} +\frac{\delta}4+\frac{\epsilon}4)^{-1}}}
\Bigr\|_{L_v^1} \notag \\
\lesssim &
\Bigl\|  \| |\nabla_h|^{-2\delta} \omega \|_{L_h^{(\frac 14 +\frac 1p -\frac{\delta}2-\frac{\epsilon}2)^{-1}}}  \cdot \| \partial_k v^3 \|^2_{L_h^{(\frac 38 -\frac 1{2p} +\frac{\delta}4
+\frac{\epsilon}4)^{-1}}} \Bigr\|_{L_v^1} \notag \\
\lesssim &
 \| |\nabla_h|^{-2\delta} \omega \|_{L_v^{\frac 1 {\epsilon}} L_h^{(\frac 14 +\frac 1p -\frac{\delta}2-\frac{\epsilon}2)^{-1}}}
  \cdot \| \partial_k v^3 \|^2_{L_v^{\frac 2 {1-\epsilon}} L_h^{(\frac 38 -\frac 1{2p} +\frac{\delta}4
+\frac{\epsilon}4)^{-1}}} \notag \\
\lesssim & \| |\nabla|^{1-\frac 2p} \omega\|_r
\cdot \| |\nabla|^{\frac 54+\frac 1p -\frac{\delta} 2} v^3 \|_2^2 \notag \\
\lesssim & \| \omega\|_r^{\frac 2p} \| \nabla \omega\|_r^{1-\frac 2p}
\cdot \| v^3 \|_{\dot H^{\frac 12+\frac 2p}} \cdot \| |\nabla_h|^{-\delta}
\partial^2 v^3 \|_2.
\end{align*}

Case 2b: $\int |\nabla_h|^{-1}\partial_3v^3 \partial_kv^3 \cdot \nabla_h |\nabla_h|^{-2\delta}
\partial_k v^3 dx$. We can use a similar commutator estimate as above to derive
\begin{align*}
 &
 \int |\nabla_h|^{-1}\partial_3v^3 \partial_kv^3 \cdot \nabla_h |\nabla_h|^{-2\delta}
\partial_k v^3 dx\notag \\
\lesssim &
\| |\nabla_h|^{-2\delta} \partial_3 v^3 \|_{L_v^2 L_h^{(\frac 38-\frac 34\delta)^{-1}}}
\cdot \| \partial_k v^3 \|^2_{L_v^4 L_h^{(\frac 5 {16} +\frac 38 \delta)^{-1}} } \notag \\
\lesssim &
\| |\nabla|^{\frac 54-\frac {\delta} 2} v^3 \|_2
\cdot \| |\nabla|^{\frac {13} 8 -\frac 3 4 \delta} v^3 \|_2^2 \notag \\
\lesssim & \| |\nabla|^{\frac 5 4 -\frac {\delta} 2} v^3 \|_2^2 \cdot \| |\nabla|^{2-\delta} v^3\|_2
\notag \\
\lesssim& \|  |\nabla_h|^{-\delta} \partial v^3  \|_2^{\frac 2p}
\cdot \| v^3 \|_{\dot H^{\frac 12+\frac 2p}} \cdot \| |\nabla_h|^{-\delta} \partial^2 v^3 \|_2^{2-\frac 2p}.
\end{align*}

\subsubsection{Estimate of \eqref{c5}}
We only need to deal with the expression for $l,m \in \{1,2\}$:
\begin{align}
 & \int \partial_l v^m \partial_m v^l \mathcal R_3 (|\nabla_h|^{-2\delta} \partial_k v^3) dx \notag \\
 = & \int \mathcal R_2( \partial_3 v^3) \cdot \mathcal R_2( \partial_3 v^3) \cdot \mathcal R_3( |\nabla_h|^{-2\delta}
 \partial_k v^3) dx \label{tmp_c5_a} \\
 & + \int \mathcal R_2( \omega) \cdot \mathcal R_2(\partial_3 v^3)
 \cdot \mathcal R_3( |\nabla_h|^{-2\delta} \partial_k v^3) dx \label{tmp_c5_b} \\
 & + \int  \mathcal R_2( \omega) \cdot \mathcal R_2(\omega)
 \cdot \mathcal R_3( |\nabla_h|^{-2\delta} \partial_k v^3) dx,\label{tmp_c5_c}  \\
 & \quad +\cdots. \notag
 \end{align}
where in the above $\mathcal R_2$, $\mathcal R_3$ denote Riesz type operators in
$x_h=(x_1,x_2)$ and the whole space $\mathbb R^3$ respectively.
The notation ``$\cdots$" denotes other omitted terms in the summation which can be represented
by either \eqref{tmp_c5_a}, \eqref{tmp_c5_b} or \eqref{tmp_c5_c}.
Clearly
\begin{align*}
|\eqref{tmp_c5_a}|
& \lesssim
\| |\nabla_h|^{-2\delta} \partial_k v^3 \|_{L_v^2 L_h^{(\frac 38-\frac 34\delta)^{-1}}}
\cdot \| \partial_3 v^3 \|^2_{L_v^4 L_h^{(\frac 5 {16} +\frac 38 \delta)^{-1}} } \notag \\
& \lesssim \|  |\nabla_h|^{-\delta} \partial v^3  \|_2^{\frac 2p}
\cdot \| v^3 \|_{\dot H^{\frac 12+\frac 2p}} \cdot \| |\nabla_h|^{-\delta} \partial^2 v^3 \|_2^{2-\frac 2p}.
\end{align*}
On the other hand,
\begin{align*}
|\eqref{tmp_c5_b}|
\lesssim &
 \|  \omega \|_{L_v^{\frac 1 {\epsilon}} L_h^{(\frac 14 +\frac 1p +\frac{\delta}2-\frac{\epsilon}2)^{-1}}}
  \cdot \| \partial v^3 \|_{L_v^{\frac 2 {1-\epsilon}} L_h^{(\frac 38 -\frac 1{2p} +\frac{\delta}4
+\frac{\epsilon}4)^{-1}}}
\cdot \| |\nabla_h|^{-2\delta}  \partial v^3 \|_{L_v^{\frac 2 {1-\epsilon}} L_h^{(\frac 38 -\frac 1{2p} -\frac{3\delta}4
+\frac{\epsilon}4)^{-1}}}
 \notag \\
\lesssim & \| |\nabla|^{1-\frac 2p} \omega\|_r
\cdot \| |\nabla|^{\frac 54+\frac 1p -\frac{\delta} 2} v^3 \|_2^2 \notag \\
\lesssim & \| \omega\|_r^{\frac 2p} \| \nabla \omega\|_r^{1-\frac 2p}
\cdot \| v^3 \|_{\dot H^{\frac 12+\frac 2p}} \cdot \| |\nabla_h|^{-\delta}
\partial^2 v^3 \|_2.
\end{align*}
Finally for \eqref{tmp_c5_c} we can integrate by parts in $\partial_k$. Then
\begin{align*}
| \eqref{tmp_c5_c}|
\lesssim&
\| \nabla \omega \|_r
\cdot \| \omega \|_{L_v^{\frac 1 {\epsilon}} L_h^{(\frac 14 +\frac 1p +\frac {\delta}2
-\frac {\epsilon} 2)^{-1}}}
\cdot \| |\nabla_h|^{-2\delta} v^3
\|_{L_v^{(1-\frac 1r -\epsilon)^{-1}}
L_h^{(\frac 14-\frac 5 6 \delta -\frac 1p +\frac{\epsilon} 2)^{-1} } } \notag \\
\lesssim & \| \nabla \omega \|_r
\cdot \| |\nabla|^{1-\frac 2p} \omega \|_r
\cdot \| v^3 \|_{\dot H^{\frac 12 +\frac 2p}}.
\end{align*}

\section{Gronwall and proof of main theorem}
\subsection{Gronwall for $p=2$}
The estimate of $\omega$ is
\begin{align}\label{d1}
 \begin{split}
   &\frac{1}{r}\frac{d}{dt} \|\omega\|_{L^r}^r
    +(r-1)\|\nabla |\omega|^{\frac r2} \|_{L^2}^2 \\
\lesssim & \|v^3\|_{\dot{H}^{\frac32}}\|\omega\|_{L^{r}}^{r-1}
   \|\nabla \omega\|_{L^{r}}
   + \|v^3\|_{\dot{H}^{\frac32}}\||\nabla_h|^{-\delta}\partial_3^2v^3\|_{L^2}
           \|\omega\|_{L^r}^{r-1}\\
      &
  + (\||\nabla_h|^{-\delta}\partial^2 v^3\|_{L^2}^{\frac12+\delta+
       \epsilon_1}
       +\||\nabla_h|^{-\delta}\partial^2 v^3\|_{L^2}^{\frac12+\delta-
       \epsilon_1})\cdot
       \frac{1}{1+\|\omega\|^{100}_{L^r}}
       \| \nabla |\omega|^{\frac r2} \|_{L^2}\\
 &+\sqrt{\log(10+\||\nabla_h|^{-\delta}\partial v^3\|_{L^2}
  +\|\omega\|_{L^r})}\big(\|v^3\|_{\dot{H}^{\frac{3}{2}}}
  +1\big)\|\nabla \omega\|_{L^{r}} \|\omega\|_{L^r}^{r-1}.
  \end{split}
\end{align}
The estimate of $v^3$ is
\begin{align}\label{d2}
 \begin{split}
 &\frac12 \frac{d}{dt}\bigg(\sum_{k=1}^3\||\nabla_h|^{-\delta}\partial_k v^3\|^2_{L^2}\bigg)
    +\sum_{k=1}^3\||\nabla_h|^{-\delta}\partial_k \nabla v^3\|^2_{L^2}\\
\lesssim &  \|v^3\|_{\dot{H}^{\frac32}}\||\nabla_h|^{-\delta}\partial^2 v^3\|_{L^2}
        \||\nabla_h|^{-\delta}\partial_k v^3\|_{L^2}
   +\|\nabla \omega\|_{L^{r}}
    \||\nabla_h|^{-\delta}\partial v^3\|_{L^2} \|v^3\|_{\dot{H}^{\frac32}}\\
  &+\|\omega\|_{L^r}\|v^3\|_{\dot{H}^{\frac32}}
    \||\nabla_h|^{-\delta}\partial^2 v^3\|_{L^2}
   +\|\omega\|_{L^r} \|\nabla \omega\|_{L^r}
    \|v^3\|_{\dot{H}^{\frac32}}.
  \end{split}
\end{align}
Multiplying inequality \eqref{d1} with $\|\omega\|_{L^r}^{2-r}$ yields
\begin{align}\label{d3}
 \begin{split}
   &\frac{1}{2}\frac{d}{dt}\|\omega\|_{L^r}^{2}
    +(r-1)\|\omega\|_{L^r}^{2-r}
    \|\nabla \omega|^{\frac r2} \|_{L^2}^2\\
\lesssim & \|v^3\|_{\dot{H}^{\frac32}}\|\omega\|_{L^{r}}
   \|\nabla \omega\|_{L^{r}}
   + \|v^3\|_{\dot{H}^{\frac32}}\||\nabla_h|^{-\delta}\partial_3^2v^3\|_{L^2}
           \|\omega\|_{L^r}\\
      &
  +(\||\nabla_h|^{-\delta}\partial^2 v^3\|_{L^2}^{\frac12+\delta+
       \epsilon_1}+\||\nabla_h|^{-\delta}\partial^2 v^3\|_{L^2}^{\frac12+\delta-
       \epsilon_1})\cdot
       \frac{1}{1+\|\omega\|^{100}_{L^r}}
    \|\nabla |\omega|^{\frac r2} \|_{L^2}
    \|\omega\|_{L^r}^{2-r}\\
 &+\sqrt{\log(10+\||\nabla_h|^{-\delta}\partial v^3\|_{L^2}
  +\|\omega\|_{L^r})}\big(\|v^3\|_{\dot{H}^{\frac{3}{2}}}
  +1\big)\|\nabla \omega\|_{L^{r}} \|\omega\|_{L^r}.
  \end{split}
\end{align}
Then, it follows from \eqref{d3} that
\begin{align}\label{d4}
 \begin{split}
   &\frac{d}{dt}\|\omega\|_{L^r}^{2}
    +\|\omega\|_{L^r}^{2-r}
    \| \nabla  |\omega|^{\frac r2} \|_{L^2}^2 \\
\le &\;  C\|v^3\|_{\dot{H}^{\frac32}}^2\|\omega\|_{L^{r}}^{2}
         + \frac 1 {100} \||\nabla_h|^{-\delta}\partial^2v^3\|^2_{L^2}\\
      &+C \log(10+\||\nabla_h|^{-\delta}\partial v^3\|_{L^2}
      +\|\omega\|_{L^r})\big(\|v^3\|^2_{\dot{H}^{\frac{3}{2}}}+1\big)
      \|\omega\|_{L^{r}}^{2}.
  \end{split}
\end{align}
In addition, we know from \eqref{d2} that
\begin{align}\label{d5}
 \begin{split}
   &\frac{d}{dt}\||\nabla_h|^{-\delta}\partial v^3\|^2_{L^2}
    +\||\nabla_h|^{-\delta}\partial^2 v^3\|^2_{L^2}\\
\le & \; C  \|v^3\|^2_{\dot{H}^{\frac32}}\||\nabla_h|^{-\delta}\partial v^3\|^2_{L^2}
   + C \|v^3\|_{\dot{H}^{\frac32}}^2\|\omega\|^{2}_{L^r}
  +\frac 1 {100} \|\omega\|_{L^r}^{2-r}
    \||\nabla |\omega|^{\frac r2} \|_{L^2}^2.
  \end{split}
\end{align}
Adding \eqref{d4} and \eqref{d5} together, one has
\begin{align}\label{d6}
 \begin{split}
   &\frac{d}{dt}\big(\|\omega\|_{L^r}^{2}
   +\||\nabla_h|^{-\delta}\partial v^3\|^2_{L^2}\big)
   +\frac 12 \|\omega\|_{L^r}^{2-r}
    \| \nabla |\omega|^{\frac r2} \|_{L^2}^2
    +\frac 12 \||\nabla_h|^{-\delta}\partial^2 v^3\|^2_{L^2}\\
\le &\; C \|v^3\|_{\dot{H}^{\frac32}}^2\big(\|\omega\|_{L^r}^{2}
   +\||\nabla_h|^{-\delta}\partial v^3\|^2_{L^2}\big)
   \\
      &+C\log\big(10+\||\nabla_h|^{-\delta}\partial v^3\|_{L^2}
      +\|\omega\|_{L^r}\big)\big(\|v^3\|^2_{\dot{H}^{\frac{3}{2}}}+1\big)
      \|\omega\|_{L^r}^{2}.
  \end{split}
\end{align}
Using Gronwall inequality, we obtain that
\begin{align}\label{d7}
 \begin{split}
   &\|\omega(t)\|_{L^r}^{2}
   +\||\nabla_h|^{-\delta}\partial v^3(t)\|^2_{L^2}\\
\leq &\big(\|\omega_0\|_{L^r}^{2}
   +\||\nabla_h|^{-\delta}\partial u_0^3\|^2_{L^2}+10\big)
   ^{\exp\{C\int_0^t(\|v^3(s)\|^2_{\dot{H}^{\frac32}}
     +1)ds\}}.
  \end{split}
\end{align}
It also follows from \eqref{d6} and \eqref{d7} that
\begin{align*}
   &\int_0^t\|\omega\|_{L^r}^{2-r}
    \big\||\nabla \omega| |\omega|^{\frac r2-1}\big\|_{L^2}^2ds
    +\int_0^t\||\nabla_h|^{-\delta}\partial^2 v^3\|^2_{L^2}ds\\
\leq & C\big(\|\omega\|_{L_t^\infty L_x^r}^{2}
   +\||\nabla_h|^{-\delta}\partial v^3\|^2_{L_t^\infty L_x^2}\big)
   \log\big(10+\||\nabla_h|^{-\delta}\partial v^3\|_{L_t^\infty L_x^2}
      +\|\omega\|_{L_t^\infty L_t^r}\big)\\
      &\big[\int_0^t(\|v^3(s)\|^2_{\dot{H}^{\frac32}}
     +1)ds\big].
\end{align*}

\subsection{Gronwall for $2<p<\infty$}
Now we consider the case when $2<p<\infty$. The estimate for $\omega$ is
\begin{align}\label{d8}
 \begin{split}
&\frac{1}{r}\frac{d}{dt}\||\omega|^{\frac r2}\|_{L^2}^2
    +\frac{4(r-1)}{r^2}\|\nabla |\omega|^{\frac r2}\|_{L^2}^2\\
\leq & C\|v^3\|_{\dot{H}^{\frac12 + \frac{2}{p}}}
        \||\nabla_h|^{-\delta}\partial_3^2 v^3\|_{L^2}
       \| |\omega|^{\frac{r}{2}} \|^{1-\frac2r+\frac{2}{p}}_{L^2}
       \| \nabla |\omega|^{\frac r2} \|^{1-\frac{2}{p}}_{L^2}\\
   & +C\|v^3\|_{\dot{H}^{\frac12+\frac{2}{p}}}
  \||\omega|^{\frac r2}\|_{L^2}^{\frac2p}
  \|\nabla |\omega|^{\frac r2}\|_{L^2}^{2-\frac2p}.
  \end{split}
\end{align}

Multiplying inequality \eqref{d8} with $\||\omega|^{\frac r2}\|_{L^2}^{\frac4r-2}$, we get
\begin{align}\label{d9}
\begin{split}
   &\frac12\frac{d}{dt}\||\omega|^{\frac r2}\|_{L^2}^{\frac4r}
    +\frac{4(r-1)}{r^2}\||\omega|^{\frac r2}\|_{L^2}^{\frac4r-2}\|\nabla |\omega|^{\frac r2}\|_{L^2}^2\\
\leq &C\|v^3\|_{\dot{H}^{\frac12+\frac{2}{p}}}
    \||\omega|^{\frac r2}\|_{L^2}^{\frac{4}{rp}}
    \big(\||\omega|^{\frac r2}\|_{L^2}^{\frac2r-1}\|\nabla |\omega|^{\frac r2}\|_{L^2}\big)^{1-\frac2p}
    \||\nabla_h|^{-\delta}\partial_3^2 v^3\|_{L^2}\\
   & +C\|v^3\|_{\dot{H}^{\frac12+\frac{2}{p}}}
  \||\omega|^{\frac r2}\|_{L^2}^\frac{4}{rp}
  \big(\||\omega|^{\frac r2}\|_{L^2}^{\frac2r-1}\|\nabla |\omega|^{\frac r2}\|_{L^2}\big)^{2-\frac2p}\\
\leq &C\|v^3\|_{\dot{H}^{\frac12+\frac{2}{p}}}^p \||\omega|^{\frac r2}\|_{L^2}^{\frac{4}{r}}
    +\frac 1 {100} \||\nabla_h|^{-\delta}\partial_3^2 v^3\|_{L^2}^2.
  \end{split}
\end{align}

The estimate for $\partial_k v^3$ is
\begin{align}\label{d10}
\begin{split}
&\frac12 \frac{d}{dt}\bigg(\sum_{k=1}^3\||\nabla_h|^{-\delta}\partial_k v^3\|^2_{L^2}\bigg)
    +\sum_{k=1}^3\||\nabla_h|^{-\delta}\partial_k \nabla v^3\|^2_{L^2}\\
\leq  &C\|v^3\|_{\dot{H}^{\frac12+\frac2p}}
    \||\nabla_h|^{-\delta}\partial_k v^3\|^{\frac2p}_{L^2}
    \||\nabla_h|^{-\delta}\partial_k \nabla v^3\|^{2-\frac2p}_{L^2}\\
    &+ C\|v^3\|_{\dot{H}^{\frac12+\frac2p}}
    \||\omega|^{\frac r2}\|_{L^2}^\frac{4}{rp}
  \big(\||\omega|^{\frac r2}\|_{L^2}^{\frac2r-1}\|\nabla |\omega|^{\frac r2}\|_{L^2}\big)^{2-\frac2p}\\
   &+C\|v^3\|_{\dot{H}^{\frac12+\frac2p}}
   \||\omega|^{\frac r2}\|_{L^2}^{\frac{4}{rp}}
    \big(\||\omega|^{\frac r2}\|_{L^2}^{\frac2r-1}\|\nabla |\omega|^{\frac r2}\|_{L^2}\big)^{1-\frac2p}
    \||\nabla_h|^{-\delta}\partial_k \nabla v^3\|_{L^2} \\
    & \; + C \| v^3 \|_{\dot H^{\frac 12 +\frac 2p}} \cdot \| \nabla \omega \|_r
 \cdot \| |\nabla_h|^{-\delta} \partial v^3 \|_2^{\frac 2p}
 \cdot \| |\nabla_h|^{-\delta} \partial^2 v^3 \|_2^{1-\frac 2p}.
    \end{split}
\end{align}
Adding \eqref{d9} and \eqref{d10} together and using Young inequality, one has
\begin{align*}
   &\frac{d}{dt}\bigg(\||\omega|^{\frac r2}\|_{L^2}^{\frac4r}
   + \sum_{k=1}^3\||\nabla_h|^{-\delta}\partial_k v^3\|_{L^2}^2\bigg)\\
    &\quad+\bigg(\||\omega|^{\frac r2}\|_{L^2}^{\frac4r-2}\|\nabla |\omega|^{\frac r2}\|_{L^2}^2
    +\sum_{k=1}^3\||\nabla_h|^{-\delta}\nabla \partial_k v^3\|_{L^2}^2\bigg)\\
&\leq  C\|v^3\|^p_{\dot{H}^{\frac12+\frac{2}{p}}}
 \bigg(\||\omega|^{\frac r2}\|_{L^2}^{\frac4r}
   +\sum_{k=1}^3\||\nabla_h|^{-\delta}\partial_k v^3\|_{L^2}^2\bigg).
\end{align*}
Therefore, standard Gronwall inequality shows that
\begin{align}\label{d11}
\begin{split}
   &\||\omega|^{\frac r2}(t)\|_{L^2}^{\frac4r}
   + \sum_{k=1}^3\||\nabla_h|^{-\delta}\partial_k v^3(t)\|_{L^2}^2\\
    &\quad+\int_0^t\||\omega|^{\frac r2}(s)\|_{L^2}^{\frac4r-2}\|\nabla |\omega|^{\frac r2}(s)\|_{L^2}^2ds
    +\sum_{k=1}^3\int_0^t\||\nabla_h|^{-\delta}\nabla \partial_k v^3(s)\|_{L^2}^2ds\\
&\leq
\bigg( \||\omega_0|^{\frac r2}\|_{L^2}^{\frac4r}
   +\sum_{k=1}^3\||\nabla_h|^{-\delta}\partial_k v_0^3\|_{L^2}^2\bigg)
   \exp\{C\int_0^t \|v^3(s)\|^p_{\dot{H}^{\frac12+\frac{2}{p}}}ds\}.
\end{split}
\end{align}

Now we are ready to prove the main theorem.

\begin{proof}[Proof of Theorem \ref{t1.2} (for $2\leq p<\infty$)]
By smoothing estimates we may assume without loss of generality
that $v_0 \in \dot H^{\frac 12} \cap \dot H^1$, and $\Omega_0,\, \nabla^4 \Omega_0 \in L^{r_0}$.
With these assumptions (and propagation of regularity) we note that the auxiliary norms
$\|\omega\|_r$, $\| |\nabla_h|^{-\delta} \partial v^3 \|_2$ are well defined for any $r \in (2-\epsilon_0,2]$ with $\epsilon_0>0$ sufficiently small, during the life span of the local solution.

Now to control the local solution,
by using Proposition \ref{vH1} and Remark \ref{rem3.3}, it suffices for us to control
$\|\omega \|_{L_t^p \dot H^{-\frac 12 +\frac 2p} (0,T^\ast)}$ if $2\le p\le 4$ and
$\|\omega \|_{L_t^{\infty} L_x^{\tilde r}}$,  for some $\tilde r$ satisfying $\frac 12
<\frac 1 {\tilde r} <\frac 23 (1-\frac 1p)$, if $4<p<\infty$.
Consider first the case $4<p<\infty$. We shall take $r$ sufficiently close to $2$.
By the Gronwall estimates derived in previous sections, we have uniform estimates on
$\|\omega\|_r$.  It follows easily that the solution
remains regular.

Next for $2\le p\le 4$ we can  take $r$ sufficiently close to $2$ satisfying also
$\frac 2p + \frac 3r -2 >0$. Then
\begin{align*}
\| \omega \|_{\dot H^{-\frac 12+\frac 2p}}
& \lesssim \| |\nabla|^{\frac 2p+\frac 3r-2} \omega \|_r \notag \\
& \lesssim \| \omega \|_r^{3-\frac 3r -\frac 2p} \| \nabla \omega \|_r^{\frac 2p+\frac 3r-2}
\notag \\
& \lesssim  \| \nabla ( |\omega|^{\frac r2} ) \|_2^{\frac 2p+\frac 3r -2}
\cdot \| \omega \|_r^{r-\frac rp -\frac 12}
\end{align*}
Noting that $0<\frac 2p+\frac 3r -2<\frac 2p$ and $\|\nabla(|\omega|^{\frac r2} )\|_{L_t^2L_x^2}
\lesssim 1$,   it follows easily that
$$
\| \omega \|_{L_t^p \dot H^{-\frac 12+\frac 2p} (0,T^\ast)} <\infty.
$$
Thus the solution remains regular.
\end{proof}

\section*{Acknowledgements}
The first author was  in part
supported by NSFC (No. 11626075) and Zhejiang Province Science fund for Youths (No. LQ17A010007). D. Li was supported in part by an Nserc Discovery grant.
Z. Lei and N. Zhao was in part supported by NSFC (grant No. 11421061), National Support Program for Young Top-Notch Talents and SGST 09DZ2272900.


\end{document}